
\documentclass[suppldata]{gOMS2e}   

\usepackage{subfigure}
\usepackage{booktabs}
\usepackage{tikz}

\newtheorem{theorem}{Theorem}[section]

\makeatletter
\newcommand\MkNewTheorem[2]{%
  \newtheorem{#1}{#2}
  \expandafter\def\csname c@#1\endcsname{\c@theorem}
  \expandafter\def\csname p@#1\endcsname{\p@theorem}
  \expandafter\def\csname the#1\endcsname{\thetheorem}
  \expandafter\def\csname #1name\endcsname{#2}
}

\MkNewTheorem{corollary}{Corollary}
\MkNewTheorem{lemma}{Lemma}
\MkNewTheorem{proposition}{Proposition}
\MkNewTheorem{prop}{Proposition}
\MkNewTheorem{claim}{Claim}
\MkNewTheorem{observation}{Observation}
\MkNewTheorem{obs}{Observation}
\MkNewTheorem{conjecture}{Conjecture}
\MkNewTheorem{openquestion}{Open question}

\theoremstyle{definition}
\MkNewTheorem{example}{Example}
\MkNewTheorem{exercise}{Exercise}
\MkNewTheorem{notation}{Notation}
\MkNewTheorem{assumption}{Assumption}
\MkNewTheorem{definition}{Definition}
\MkNewTheorem{remark}{Remark}
\MkNewTheorem{goal}{Goal}
\MkNewTheorem{problem}{Problem}

\usepackage{verbatim}

\usepackage{tikz}
\definecolor{mediumspringgreen}{rgb}{0.0, 0.98039215, 0.60392156}

\newcommand{\bb}{\mathbb}
\newcommand{\R}{\bb R}
\newcommand{\Q}{\bb Q}
\newcommand{\Z}{\bb Z}

\newcommand{\B}{B}
\renewcommand{\P}{\mathcal{P}}

\newcommand\st{:}
\newcommand{\setcond}[2]{\left\{ #1 \,:\, #2 \right\}}

\DeclareMathOperator    \relint         {rel\,int}
\DeclareMathOperator    \verts          {vert}
\DeclareMathOperator    \intr                   {int}
\DeclareMathOperator    \cl                     {cl}

\chardef\Myunderscore=`\_
\usepackage{hyperref}  
\pdfstringdefDisableCommands{%
  \def\Myunderscore{\textunderscore}%
}
\newcommand\underscore{\Myunderscore\allowbreak}
\let\_=\underscore

\usepackage{pgf}
\pgfkeyssetvalue{/sagefunc/bccz_counterexample}{\href{\githubsearchurl?q=\%22def+bccz_counterexample(\%22}{\sage{bccz\underscore{}counterexample}}}
\pgfkeyssetvalue{/sagefunc/bcdsp_arbitrary_slope}{\href{\githubsearchurl?q=\%22def+bcdsp_arbitrary_slope(\%22}{\sage{bcdsp\underscore{}arbitrary\underscore{}slope}}}
\pgfkeyssetvalue{/sagefunc/bhk_gmi_irrational}{\href{\githubsearchurl?q=\%22def+bhk_gmi_irrational(\%22}{\sage{bhk\underscore{}gmi\underscore{}irrational}}}
\pgfkeyssetvalue{/sagefunc/bhk_irrational}{\href{\githubsearchurl?q=\%22def+bhk_irrational(\%22}{\sage{bhk\underscore{}irrational}}}
\pgfkeyssetvalue{/sagefunc/bhk_slant_irrational}{\href{\githubsearchurl?q=\%22def+bhk_slant_irrational(\%22}{\sage{bhk\underscore{}slant\underscore{}irrational}}}
\pgfkeyssetvalue{/sagefunc/chen_4_slope}{\href{\githubsearchurl?q=\%22def+chen_4_slope(\%22}{\sage{chen\underscore{}4\underscore{}slope}}}
\pgfkeyssetvalue{/sagefunc/dg_2_step_mir}{\href{\githubsearchurl?q=\%22def+dg_2_step_mir(\%22}{\sage{dg\underscore{}2\underscore{}step\underscore{}mir}}}
\pgfkeyssetvalue{/sagefunc/dg_2_step_mir_limit}{\href{\githubsearchurl?q=\%22def+dg_2_step_mir_limit(\%22}{\sage{dg\underscore{}2\underscore{}step\underscore{}mir\underscore{}limit}}}
\pgfkeyssetvalue{/sagefunc/drlm_2_slope_limit}{\href{\githubsearchurl?q=\%22def+drlm_2_slope_limit(\%22}{\sage{drlm\underscore{}2\underscore{}slope\underscore{}limit}}}
\pgfkeyssetvalue{/sagefunc/drlm_3_slope_limit}{\href{\githubsearchurl?q=\%22def+drlm_3_slope_limit(\%22}{\sage{drlm\underscore{}3\underscore{}slope\underscore{}limit}}}
\pgfkeyssetvalue{/sagefunc/drlm_backward_3_slope}{\href{\githubsearchurl?q=\%22def+drlm_backward_3_slope(\%22}{\sage{drlm\underscore{}backward\underscore{}3\underscore{}slope}}}
\pgfkeyssetvalue{/sagefunc/gj_2_slope}{\href{\githubsearchurl?q=\%22def+gj_2_slope(\%22}{\sage{gj\underscore{}2\underscore{}slope}}}
\pgfkeyssetvalue{/sagefunc/gj_2_slope_repeat}{\href{\githubsearchurl?q=\%22def+gj_2_slope_repeat(\%22}{\sage{gj\underscore{}2\underscore{}slope\underscore{}repeat}}}
\pgfkeyssetvalue{/sagefunc/gj_forward_3_slope}{\href{\githubsearchurl?q=\%22def+gj_forward_3_slope(\%22}{\sage{gj\underscore{}forward\underscore{}3\underscore{}slope}}}
\pgfkeyssetvalue{/sagefunc/gmic}{\href{\githubsearchurl?q=\%22def+gmic(\%22}{\sage{gmic}}}
\pgfkeyssetvalue{/sagefunc/hildebrand_2_sided_discont_1_slope_1}{\href{\githubsearchurl?q=\%22def+hildebrand_2_sided_discont_1_slope_1(\%22}{\sage{hildebrand\underscore{}2\underscore{}sided\underscore{}discont\underscore{}1\underscore{}slope\underscore{}1}}}
\pgfkeyssetvalue{/sagefunc/hildebrand_2_sided_discont_2_slope_1}{\href{\githubsearchurl?q=\%22def+hildebrand_2_sided_discont_2_slope_1(\%22}{\sage{hildebrand\underscore{}2\underscore{}sided\underscore{}discont\underscore{}2\underscore{}slope\underscore{}1}}}
\pgfkeyssetvalue{/sagefunc/hildebrand_5_slope_22_1}{\href{\githubsearchurl?q=\%22def+hildebrand_5_slope_22_1(\%22}{\sage{hildebrand\underscore{}5\underscore{}slope\underscore{}22\underscore{}1}}}
\pgfkeyssetvalue{/sagefunc/hildebrand_5_slope_24_1}{\href{\githubsearchurl?q=\%22def+hildebrand_5_slope_24_1(\%22}{\sage{hildebrand\underscore{}5\underscore{}slope\underscore{}24\underscore{}1}}}
\pgfkeyssetvalue{/sagefunc/hildebrand_5_slope_28_1}{\href{\githubsearchurl?q=\%22def+hildebrand_5_slope_28_1(\%22}{\sage{hildebrand\underscore{}5\underscore{}slope\underscore{}28\underscore{}1}}}
\pgfkeyssetvalue{/sagefunc/hildebrand_discont_3_slope_1}{\href{\githubsearchurl?q=\%22def+hildebrand_discont_3_slope_1(\%22}{\sage{hildebrand\underscore{}discont\underscore{}3\underscore{}slope\underscore{}1}}}
\pgfkeyssetvalue{/sagefunc/kf_n_step_mir}{\href{\githubsearchurl?q=\%22def+kf_n_step_mir(\%22}{\sage{kf\underscore{}n\underscore{}step\underscore{}mir}}}
\pgfkeyssetvalue{/sagefunc/kzh_10_slope_1}{\href{\githubsearchurl?q=\%22def+kzh_10_slope_1(\%22}{\sage{kzh\underscore{}10\underscore{}slope\underscore{}1}}}
\pgfkeyssetvalue{/sagefunc/kzh_28_slope_1}{\href{\githubsearchurl?q=\%22def+kzh_28_slope_1(\%22}{\sage{kzh\underscore{}28\underscore{}slope\underscore{}1}}}
\pgfkeyssetvalue{/sagefunc/kzh_28_slope_2}{\href{\githubsearchurl?q=\%22def+kzh_28_slope_2(\%22}{\sage{kzh\underscore{}28\underscore{}slope\underscore{}2}}}
\pgfkeyssetvalue{/sagefunc/kzh_3_slope_param_extreme_1}{\href{\githubsearchurl?q=\%22def+kzh_3_slope_param_extreme_1(\%22}{\sage{kzh\underscore{}3\underscore{}slope\underscore{}param\underscore{}extreme\underscore{}1}}}
\pgfkeyssetvalue{/sagefunc/kzh_3_slope_param_extreme_2}{\href{\githubsearchurl?q=\%22def+kzh_3_slope_param_extreme_2(\%22}{\sage{kzh\underscore{}3\underscore{}slope\underscore{}param\underscore{}extreme\underscore{}2}}}
\pgfkeyssetvalue{/sagefunc/kzh_4_slope_param_extreme_1}{\href{\githubsearchurl?q=\%22def+kzh_4_slope_param_extreme_1(\%22}{\sage{kzh\underscore{}4\underscore{}slope\underscore{}param\underscore{}extreme\underscore{}1}}}
\pgfkeyssetvalue{/sagefunc/kzh_5_slope_fulldim_1}{\href{\githubsearchurl?q=\%22def+kzh_5_slope_fulldim_1(\%22}{\sage{kzh\underscore{}5\underscore{}slope\underscore{}fulldim\underscore{}1}}}
\pgfkeyssetvalue{/sagefunc/kzh_5_slope_fulldim_2}{\href{\githubsearchurl?q=\%22def+kzh_5_slope_fulldim_2(\%22}{\sage{kzh\underscore{}5\underscore{}slope\underscore{}fulldim\underscore{}2}}}
\pgfkeyssetvalue{/sagefunc/kzh_5_slope_fulldim_3}{\href{\githubsearchurl?q=\%22def+kzh_5_slope_fulldim_3(\%22}{\sage{kzh\underscore{}5\underscore{}slope\underscore{}fulldim\underscore{}3}}}
\pgfkeyssetvalue{/sagefunc/kzh_5_slope_fulldim_4}{\href{\githubsearchurl?q=\%22def+kzh_5_slope_fulldim_4(\%22}{\sage{kzh\underscore{}5\underscore{}slope\underscore{}fulldim\underscore{}4}}}
\pgfkeyssetvalue{/sagefunc/kzh_5_slope_fulldim_5}{\href{\githubsearchurl?q=\%22def+kzh_5_slope_fulldim_5(\%22}{\sage{kzh\underscore{}5\underscore{}slope\underscore{}fulldim\underscore{}5}}}
\pgfkeyssetvalue{/sagefunc/kzh_5_slope_fulldim_covers_1}{\href{\githubsearchurl?q=\%22def+kzh_5_slope_fulldim_covers_1(\%22}{\sage{kzh\underscore{}5\underscore{}slope\underscore{}fulldim\underscore{}covers\underscore{}1}}}
\pgfkeyssetvalue{/sagefunc/kzh_5_slope_fulldim_covers_2}{\href{\githubsearchurl?q=\%22def+kzh_5_slope_fulldim_covers_2(\%22}{\sage{kzh\underscore{}5\underscore{}slope\underscore{}fulldim\underscore{}covers\underscore{}2}}}
\pgfkeyssetvalue{/sagefunc/kzh_5_slope_fulldim_covers_3}{\href{\githubsearchurl?q=\%22def+kzh_5_slope_fulldim_covers_3(\%22}{\sage{kzh\underscore{}5\underscore{}slope\underscore{}fulldim\underscore{}covers\underscore{}3}}}
\pgfkeyssetvalue{/sagefunc/kzh_5_slope_fulldim_covers_4}{\href{\githubsearchurl?q=\%22def+kzh_5_slope_fulldim_covers_4(\%22}{\sage{kzh\underscore{}5\underscore{}slope\underscore{}fulldim\underscore{}covers\underscore{}4}}}
\pgfkeyssetvalue{/sagefunc/kzh_5_slope_fulldim_covers_5}{\href{\githubsearchurl?q=\%22def+kzh_5_slope_fulldim_covers_5(\%22}{\sage{kzh\underscore{}5\underscore{}slope\underscore{}fulldim\underscore{}covers\underscore{}5}}}
\pgfkeyssetvalue{/sagefunc/kzh_5_slope_fulldim_covers_6}{\href{\githubsearchurl?q=\%22def+kzh_5_slope_fulldim_covers_6(\%22}{\sage{kzh\underscore{}5\underscore{}slope\underscore{}fulldim\underscore{}covers\underscore{}6}}}
\pgfkeyssetvalue{/sagefunc/kzh_5_slope_q22_f10_1}{\href{\githubsearchurl?q=\%22def+kzh_5_slope_q22_f10_1(\%22}{\sage{kzh\underscore{}5\underscore{}slope\underscore{}q22\underscore{}f10\underscore{}1}}}
\pgfkeyssetvalue{/sagefunc/kzh_5_slope_q22_f10_2}{\href{\githubsearchurl?q=\%22def+kzh_5_slope_q22_f10_2(\%22}{\sage{kzh\underscore{}5\underscore{}slope\underscore{}q22\underscore{}f10\underscore{}2}}}
\pgfkeyssetvalue{/sagefunc/kzh_5_slope_q22_f10_3}{\href{\githubsearchurl?q=\%22def+kzh_5_slope_q22_f10_3(\%22}{\sage{kzh\underscore{}5\underscore{}slope\underscore{}q22\underscore{}f10\underscore{}3}}}
\pgfkeyssetvalue{/sagefunc/kzh_5_slope_q22_f10_4}{\href{\githubsearchurl?q=\%22def+kzh_5_slope_q22_f10_4(\%22}{\sage{kzh\underscore{}5\underscore{}slope\underscore{}q22\underscore{}f10\underscore{}4}}}
\pgfkeyssetvalue{/sagefunc/kzh_5_slope_q22_f2_1}{\href{\githubsearchurl?q=\%22def+kzh_5_slope_q22_f2_1(\%22}{\sage{kzh\underscore{}5\underscore{}slope\underscore{}q22\underscore{}f2\underscore{}1}}}
\pgfkeyssetvalue{/sagefunc/kzh_6_slope_1}{\href{\githubsearchurl?q=\%22def+kzh_6_slope_1(\%22}{\sage{kzh\underscore{}6\underscore{}slope\underscore{}1}}}
\pgfkeyssetvalue{/sagefunc/kzh_6_slope_fulldim_covers_1}{\href{\githubsearchurl?q=\%22def+kzh_6_slope_fulldim_covers_1(\%22}{\sage{kzh\underscore{}6\underscore{}slope\underscore{}fulldim\underscore{}covers\underscore{}1}}}
\pgfkeyssetvalue{/sagefunc/kzh_6_slope_fulldim_covers_2}{\href{\githubsearchurl?q=\%22def+kzh_6_slope_fulldim_covers_2(\%22}{\sage{kzh\underscore{}6\underscore{}slope\underscore{}fulldim\underscore{}covers\underscore{}2}}}
\pgfkeyssetvalue{/sagefunc/kzh_6_slope_fulldim_covers_3}{\href{\githubsearchurl?q=\%22def+kzh_6_slope_fulldim_covers_3(\%22}{\sage{kzh\underscore{}6\underscore{}slope\underscore{}fulldim\underscore{}covers\underscore{}3}}}
\pgfkeyssetvalue{/sagefunc/kzh_6_slope_fulldim_covers_4}{\href{\githubsearchurl?q=\%22def+kzh_6_slope_fulldim_covers_4(\%22}{\sage{kzh\underscore{}6\underscore{}slope\underscore{}fulldim\underscore{}covers\underscore{}4}}}
\pgfkeyssetvalue{/sagefunc/kzh_6_slope_fulldim_covers_5}{\href{\githubsearchurl?q=\%22def+kzh_6_slope_fulldim_covers_5(\%22}{\sage{kzh\underscore{}6\underscore{}slope\underscore{}fulldim\underscore{}covers\underscore{}5}}}
\pgfkeyssetvalue{/sagefunc/kzh_7_slope_1}{\href{\githubsearchurl?q=\%22def+kzh_7_slope_1(\%22}{\sage{kzh\underscore{}7\underscore{}slope\underscore{}1}}}
\pgfkeyssetvalue{/sagefunc/kzh_7_slope_2}{\href{\githubsearchurl?q=\%22def+kzh_7_slope_2(\%22}{\sage{kzh\underscore{}7\underscore{}slope\underscore{}2}}}
\pgfkeyssetvalue{/sagefunc/kzh_7_slope_3}{\href{\githubsearchurl?q=\%22def+kzh_7_slope_3(\%22}{\sage{kzh\underscore{}7\underscore{}slope\underscore{}3}}}
\pgfkeyssetvalue{/sagefunc/kzh_7_slope_4}{\href{\githubsearchurl?q=\%22def+kzh_7_slope_4(\%22}{\sage{kzh\underscore{}7\underscore{}slope\underscore{}4}}}
\pgfkeyssetvalue{/sagefunc/ll_strong_fractional}{\href{\githubsearchurl?q=\%22def+ll_strong_fractional(\%22}{\sage{ll\underscore{}strong\underscore{}fractional}}}
\pgfkeyssetvalue{/sagefunc/mlr_cpl3_a_2_slope}{\href{\githubsearchurl?q=\%22def+mlr_cpl3_a_2_slope(\%22}{\sage{mlr\underscore{}cpl3\underscore{}a\underscore{}2\underscore{}slope}}}
\pgfkeyssetvalue{/sagefunc/mlr_cpl3_b_3_slope}{\href{\githubsearchurl?q=\%22def+mlr_cpl3_b_3_slope(\%22}{\sage{mlr\underscore{}cpl3\underscore{}b\underscore{}3\underscore{}slope}}}
\pgfkeyssetvalue{/sagefunc/mlr_cpl3_c_3_slope}{\href{\githubsearchurl?q=\%22def+mlr_cpl3_c_3_slope(\%22}{\sage{mlr\underscore{}cpl3\underscore{}c\underscore{}3\underscore{}slope}}}
\pgfkeyssetvalue{/sagefunc/mlr_cpl3_d_3_slope}{\href{\githubsearchurl?q=\%22def+mlr_cpl3_d_3_slope(\%22}{\sage{mlr\underscore{}cpl3\underscore{}d\underscore{}3\underscore{}slope}}}
\pgfkeyssetvalue{/sagefunc/mlr_cpl3_f_2_or_3_slope}{\href{\githubsearchurl?q=\%22def+mlr_cpl3_f_2_or_3_slope(\%22}{\sage{mlr\underscore{}cpl3\underscore{}f\underscore{}2\underscore{}or\underscore{}3\underscore{}slope}}}
\pgfkeyssetvalue{/sagefunc/mlr_cpl3_g_3_slope}{\href{\githubsearchurl?q=\%22def+mlr_cpl3_g_3_slope(\%22}{\sage{mlr\underscore{}cpl3\underscore{}g\underscore{}3\underscore{}slope}}}
\pgfkeyssetvalue{/sagefunc/mlr_cpl3_h_2_slope}{\href{\githubsearchurl?q=\%22def+mlr_cpl3_h_2_slope(\%22}{\sage{mlr\underscore{}cpl3\underscore{}h\underscore{}2\underscore{}slope}}}
\pgfkeyssetvalue{/sagefunc/mlr_cpl3_k_2_slope}{\href{\githubsearchurl?q=\%22def+mlr_cpl3_k_2_slope(\%22}{\sage{mlr\underscore{}cpl3\underscore{}k\underscore{}2\underscore{}slope}}}
\pgfkeyssetvalue{/sagefunc/mlr_cpl3_l_2_slope}{\href{\githubsearchurl?q=\%22def+mlr_cpl3_l_2_slope(\%22}{\sage{mlr\underscore{}cpl3\underscore{}l\underscore{}2\underscore{}slope}}}
\pgfkeyssetvalue{/sagefunc/mlr_cpl3_n_3_slope}{\href{\githubsearchurl?q=\%22def+mlr_cpl3_n_3_slope(\%22}{\sage{mlr\underscore{}cpl3\underscore{}n\underscore{}3\underscore{}slope}}}
\pgfkeyssetvalue{/sagefunc/mlr_cpl3_o_2_slope}{\href{\githubsearchurl?q=\%22def+mlr_cpl3_o_2_slope(\%22}{\sage{mlr\underscore{}cpl3\underscore{}o\underscore{}2\underscore{}slope}}}
\pgfkeyssetvalue{/sagefunc/mlr_cpl3_p_2_slope}{\href{\githubsearchurl?q=\%22def+mlr_cpl3_p_2_slope(\%22}{\sage{mlr\underscore{}cpl3\underscore{}p\underscore{}2\underscore{}slope}}}
\pgfkeyssetvalue{/sagefunc/mlr_cpl3_q_2_slope}{\href{\githubsearchurl?q=\%22def+mlr_cpl3_q_2_slope(\%22}{\sage{mlr\underscore{}cpl3\underscore{}q\underscore{}2\underscore{}slope}}}
\pgfkeyssetvalue{/sagefunc/mlr_cpl3_r_2_slope}{\href{\githubsearchurl?q=\%22def+mlr_cpl3_r_2_slope(\%22}{\sage{mlr\underscore{}cpl3\underscore{}r\underscore{}2\underscore{}slope}}}
\pgfkeyssetvalue{/sagefunc/rlm_dpl1_extreme_3a}{\href{\githubsearchurl?q=\%22def+rlm_dpl1_extreme_3a(\%22}{\sage{rlm\underscore{}dpl1\underscore{}extreme\underscore{}3a}}}
\pgfkeyssetvalue{/sagefunc/automorphism}{\href{\githubsearchurl?q=\%22def+automorphism(\%22}{\sage{automorphism}}}
\pgfkeyssetvalue{/sagefunc/multiplicative_homomorphism}{\href{\githubsearchurl?q=\%22def+multiplicative_homomorphism(\%22}{\sage{multiplicative\underscore{}homomorphism}}}
\pgfkeyssetvalue{/sagefunc/projected_sequential_merge}{\href{\githubsearchurl?q=\%22def+projected_sequential_merge(\%22}{\sage{projected\underscore{}sequential\underscore{}merge}}}
\pgfkeyssetvalue{/sagefunc/restrict_to_finite_group}{\href{\githubsearchurl?q=\%22def+restrict_to_finite_group(\%22}{\sage{restrict\underscore{}to\underscore{}finite\underscore{}group}}}
\pgfkeyssetvalue{/sagefunc/interpolate_to_infinite_group}{\href{\githubsearchurl?q=\%22def+interpolate_to_infinite_group(\%22}{\sage{interpolate\underscore{}to\underscore{}infinite\underscore{}group}}}
\pgfkeyssetvalue{/sagefunc/two_slope_fill_in}{\href{\githubsearchurl?q=\%22def+two_slope_fill_in(\%22}{\sage{two\underscore{}slope\underscore{}fill\underscore{}in}}}
\pgfkeyssetvalue{/sagefunc/generate_example_e_for_psi_n}{\href{\githubsearchurl?q=\%22def+generate_example_e_for_psi_n(\%22}{\sage{generate\underscore{}example\underscore{}e\underscore{}for\underscore{}psi\underscore{}n}}}
\pgfkeyssetvalue{/sagefunc/chen_3_slope_not_extreme}{\href{\githubsearchurl?q=\%22def+chen_3_slope_not_extreme(\%22}{\sage{chen\underscore{}3\underscore{}slope\underscore{}not\underscore{}extreme}}}
\pgfkeyssetvalue{/sagefunc/psi_n_in_bccz_counterexample_construction}{\href{\githubsearchurl?q=\%22def+psi_n_in_bccz_counterexample_construction(\%22}{\sage{psi\underscore{}n\underscore{}in\underscore{}bccz\underscore{}counterexample\underscore{}construction}}}
\pgfkeyssetvalue{/sagefunc/gomory_fractional}{\href{\githubsearchurl?q=\%22def+gomory_fractional(\%22}{\sage{gomory\underscore{}fractional}}}
\pgfkeyssetvalue{/sagefunc/not_minimal_2}{\href{\githubsearchurl?q=\%22def+not_minimal_2(\%22}{\sage{not\underscore{}minimal\underscore{}2}}}
\pgfkeyssetvalue{/sagefunc/not_extreme_1}{\href{\githubsearchurl?q=\%22def+not_extreme_1(\%22}{\sage{not\underscore{}extreme\underscore{}1}}}
\pgfkeyssetvalue{/sagefunc/kzh_2q_example_1}{\href{\githubsearchurl?q=\%22def+kzh_2q_example_1(\%22}{\sage{kzh\underscore{}2q\underscore{}example\underscore{}1}}}
\pgfkeyssetvalue{/sagefunc/zhou_two_sided_discontinuous_cannot_assume_any_continuity}{\href{\githubsearchurl?q=\%22def+zhou_two_sided_discontinuous_cannot_assume_any_continuity(\%22}{\sage{zhou\underscore{}two\underscore{}sided\underscore{}discontinuous\underscore{}cannot\underscore{}assume\underscore{}any\underscore{}continuity}}}
\pgfkeyssetvalue{/sagefunc/extremality_test}{\href{\githubsearchurl?q=\%22def+extremality_test(\%22}{\sage{extremality\underscore{}test}}}
\pgfkeyssetvalue{/sagefunc/plot_2d_diagram}{\href{\githubsearchurl?q=\%22def+plot_2d_diagram(\%22}{\sage{plot\underscore{}2d\underscore{}diagram}}}
\pgfkeyssetvalue{/sagefunc/nice_field_values}{\href{\githubsearchurl?q=\%22def+nice_field_values(\%22}{\sage{nice\underscore{}field\underscore{}values}}}
\pgfkeyssetvalue{/sagefunc/ParametricRealFieldElement}{\href{\githubsearchurl?q=\%22def+ParametricRealFieldElement(\%22}{\sage{ParametricRealFieldElement}}}
\pgfkeyssetvalue{/sagefunc/ParametricRealField}{\href{\githubsearchurl?q=\%22def+ParametricRealField(\%22}{\sage{ParametricRealField}}}
\pgfkeyssetvalue{/sagefunc/kzh_minimal_has_only_crazy_perturbation_1}{\href{\githubsearchurl?q=\%22def+kzh_minimal_has_only_crazy_perturbation_1(\%22}{\sage{kzh\underscore{}minimal\underscore{}has\underscore{}only\underscore{}crazy\underscore{}perturbation\underscore{}1}}}

\DeclareRobustCommand\sage[1]{\texttt{#1}}
\DeclareRobustCommand\sagefunc[1]{\pgfkeys{/sagefunc/#1}}

\makeatletter 
\def\@fnsymbol#1{\ensuremath{\ifcase#1\or\star\or{\star\star}\or
   {\star{\star}\star}\or \dagger\or \ddagger\or
   \mathchar "278\or \mathchar "27B\or \|\or **\or \dagger\dagger
   \or \ddagger\ddagger \else\@ctrerr\fi}}
\makeatother

\begin{document}


\title{Equivariant Perturbation in \\Gomory and Johnson's Infinite Group 
  Problem.\\[1ex] V. Software for the continuous and discontinuous 1-row case}

\author{
  \name{%
    Chun Yu Hong\textsuperscript{a}$^{\ast}$\thanks{$^\ast$ The first author's contribution was done
      during a Research Experience for Undergraduates at the University of
      California, Davis.},
    Matthias K\"oppe\textsuperscript{b}$^{\ast\ast}$\thanks{$^{\ast\ast}$Corresponding author. Email:
      mkoeppe@math.ucdavis.edu} 
    and Yuan Zhou\textsuperscript{b}
  }
  \affil{%
    \textsuperscript{a}University of California, Berkeley, Department of Statistics, USA\\ 
    \textsuperscript{b}University of California, Davis, Department of Mathematics, USA}
}

\maketitle

\begin{abstract}
  We present software for investigations with cut-generating functions in the
  Gomory--Johnson model and extensions, implemented in the computer algebra system
  SageMath.
\end{abstract}

\begin{keywords}
  Integer programming; cutting planes; group relaxations
\end{keywords}

\begin{classcode}
  90C10; 90C11
\end{classcode}

\section{Introduction}

Consider the following question from the theory of linear inequalities over the
reals: Given a (finite) system $Ax \leq b$, exactly which linear inequalities
$\langle a,x\rangle \leq \beta$ are \emph{valid}, i.e., satisfied for every
$x$ that satisfies the given system?  The answer is given, of course, by the
Farkas Lemma, or, equivalently, by the strong duality theory of linear
optimization.  As is well-known, this duality theory is symmetric: The dual of
a linear optimization problem is again a linear optimization problem, and the
dual of the dual is the original (primal) optimization problem.

The question becomes much harder when all or some of the variables are
constrained to be integers.  The theory of valid linear inequalities here is
called \emph{cutting plane theory}.  Over the past 60 years, a vast body of
research has been carried out on this topic, the largest part of it regarding
the polyhedral combinatorics of integer hulls of particular families of
problems.  The general theory again is equivalent to the duality theory of
integer linear optimization problems.  Here the dual objects are not linear,
but \emph{superadditive} (or subadditive) functionals, making the general form of
this theory infinite-dimensional even though the original problem started out
with only finitely many variables.

These superadditive (or subadditive) functionals appear in integer linear
optimization in various concrete forms, for example in the form of
\emph{dual-feasible functions}
\cite{alves-clautiaux-valerio-rietz-2016:dual-feasible-book}, 
\emph{superadditive lifting functions} 
\cite{louveaux-wolsey-2003:lifting-superadditivity-mir-survey}, 
and 
\emph{cut-generating functions}~\cite{conforti2013cut}. 
\medbreak

In the present paper, we describe some aspects of our software
\cite{infinite-group-relaxation-code} for cut-generating functions in the
classic 1-row Gomory--Johnson \cite{infinite,infinite2} model. The model has a
single parameter, a number $f\in \R\setminus\Z$.    On the
primal side of the model, one has finite-support functions $y\colon \R\to
\Z_+$ that satisfy the group equation
\begin{equation}
  \label{eq:GP}
  \sum_{r \in \R} r\, y(r) \in f + \Z.
\end{equation}
We omit any further discussion of the primal side and how it arises as an
infinite-dimensional relaxation of integer linear optimization problems; we
refer the reader to the recent survey \cite{igp_survey,igp_survey_part_2} for
a complete exposition.
Our software only works on the dual side, where the main objects of interest
are the so-called \emph{minimal valid functions}.  By a 
characterization by Gomory--Johnson \cite{infinite} (see also \cite[Theorem
2.6]{igp_survey}), 
they are the $\Z$-periodic functions $\pi\colon \R\to\R$ 
that satisfy the following conditions:
\begin{subequations}\label{eq:minimal}
  \begin{align}
    &\pi(0)=0, \ \pi(f)=1 \label{eq:minimal:01}\\
    &\pi(x) \geq 0 &&\text{for } x \in \R \label{eq:minimal:nonneg}\\
    &\pi(x) + \pi(f-x) = 1 && \text{for } x\in \R 
                   && \text{(symmetry condition)} \label{eq:minimal:symm}\\
    &\pi(x) + \pi(y) \geq \pi(x+y) &&  \text{for } x,y \in \R 
                   && \text{(subadditivity)}.\label{eq:minimal:subadd}
  \end{align}
\end{subequations}

In the same way that finite-dimensional cutting plane theory has focused on
finding families of facet-defining valid inequalities, a large part of the
literature on the Gomory--Johnson model has focused on \emph{extreme
  functions}.  These are the minimal valid functions that are not a proper
convex combination of other minimal valid
functions. \autoref{fig:cont_and_discont_pwl_functions}--left shows the famous
extreme function \sagefunc{gmic}, the cut-generating function of the Gomory
mixed integer cut.  It is convenient to rephrase the extremality condition in
terms of \emph{perturbation functions}.  We say that a
function~$\tilde\pi\colon \R\to\R$ is an \emph{effective perturbation
  function} for the minimal valid function~$\pi$, if there exists $\epsilon>0$
such that $\pi\pm\epsilon\bar\pi$ are minimal valid functions.  The effective
perturbation functions form a vector space, denoted by
$\tilde\Pi^{\pi}(\R,\Z)$; thus $\pi$ is extreme if and only if this space is
trivial.


\medbreak

This paper is part of a series, dedicated to making the theory of
cut-generating functions in the Gomory--Johnson model algorithmic.  For the
1-row case, the focus lies on $\Z$-periodic piecewise linear functions~$\pi$,
which have an obvious finite representation in the computer.
It is an easy observation that the conditions~\eqref{eq:minimal} for
minimality are finitely checkable for such functions:  Consider a $\Z^2$-periodic
polyhedral complex $\Delta\P$ on $\R^2$ such that the \emph{subadditivity
  slack} $\Delta\pi(x,y) = \pi(x)+\pi(y)-\pi(x+y)$ is a linear function on the
relative interior of each face of~$\Delta\P$.  Then, in the continuous case, it suffices to check
subadditivity (i.e., nonnegativity of $\Delta\pi$) on the vertices ($0$-dimensional
faces) of this complex. In the discontinuous case, 
one additionally has to
consider finitely many directional limits to the vertices.

In contrast to minimality, testing extremality is much more subtle.  Making it
algorithmic for various classes of functions is the main technical
contribution of the present series of papers.  In this project, the study of spaces of
perturbation functions takes a central r\^ole.  

A proof of extremality from the published literature follows a standard
pattern, which can be phrased in terms of perturbation functions as follows.
Take an effective perturbation function $\tilde\pi$; the goal is to show that
$\tilde\pi = 0$.  Whenever $\Delta\pi(x,y) = 0$ (additivity) holds for some
$x, y$, the same is true for $\tilde\pi$.  By applying reasoning from
functional equations, in particular variants of the Gomory--Johnson Interval
Lemma, to these additivity relations, one infers that $\tilde\pi$ is linear on
certain intervals.  If the intervals of linearity, which we refer to as
\emph{covered intervals}, cover the domain of $\tilde\pi$, then $\tilde\pi$
belongs to a finite-dimensional space of piecewise linear perturbation
functions.  In this case, a matrix computation suffices to decide whether
$\tilde\pi = 0$.  However, it remained unclear whether these tools and this
proof pattern were sufficient for deciding extremality in all cases.

In the first paper in the present
series, Basu et al.  \cite{basu-hildebrand-koeppe:equivariant}, focusing on
the 1-row Gomory--Johnson model,  
developed additional tools for piecewise linear functions with rational
breakpoints 
and showed that a finite number of applications of these tools suffice for
deciding extremality in the case of piecewise linear
(possibly discontinuous) minimal valid functions with rational breakpoints.
Thus, Basu et al. \cite{basu-hildebrand-koeppe:equivariant} obtained the first
algorithm for testing extremality of a minimal valid function within this
class.

This algorithm from \cite{basu-hildebrand-koeppe:equivariant} works using the
grid $\frac1q\Z$, where $q$ is the least common multiple of all breakpoint
denominators.  In contrast, our software implements a \emph{grid-free variant}%
.  In the present paper, we provide a number of new results that underly
the grid-free extremality test; they are generalizations 
of results from the literature.
A practical benefit of our grid-free variant is that its empirical running
time does not strongly depend on $q$.  Moreover, it paves the way to
computations with functions that have irrational breakpoints; this topic is further
developed in \cite{koeppe-zhou:crazy-perturbation,koeppe-zhou:algo-paper}. It
is also the basis for computational investigations with parametric families of
extreme functions in \cite{koeppe-zhou:param-abstract-isco}.

\medbreak

Our software is a tool that enables mathematical exploration and research in
the domain of cut-generating functions.
It can also be used in an educational setting, where it enables
hands-on teaching about modern cutting plane theory based on cut-generating
functions.  It removes the limitations of hand-written proofs, which would be
dominated by tedious case analysis.

Our software is written in Python, making
use of the convenient and popular framework of the open-source computer
algebra system SageMath.  We begin our article with a brief overview about
this system (\autoref{s:sage}).

Next, in \autoref{s:pwl-functions}, we describe the anatomy of our main
objects, the $\Z$-periodic, possibly discontinuous piecewise linear
functions~$\pi\colon \R\to\R$, connecting the notation from the literature to
relevant Python functions and methods.  We also introduce the electronic
compendium \cite{zhou:extreme-notes} of extreme functions found in the
literature.  

The algorithmic minimality and extremality tests make use of certain
2-dimensional diagrams, associated with the polyhedral complex~$\Delta\P$,
which record the subadditivity and additivity properties of a given
function~$\pi$.  In \autoref{s:delta-p} we give a definition of this complex
and introduce these diagrams.
Then, in \autoref{s:minimality}, we describe the details of the automatic
minimality test.  In \autoref{sec:connected-covered-components} we explain the
computation of the covered intervals for a given minimal valid function.  This
is one of the ingredients of the extremality test, which we explain
in~\autoref{s:extremality}. 

In sections \ref{sec:procedrue} and~\ref{sec:finite-group}, we describe
transformations that can be applied to extreme functions and functionality of
our software for the closely related finite group relaxation model.  We end
our article with some comments regarding the documentation and test suite of
our software (\autoref{s:doc-test}).

\medbreak

The first version of our software \cite{infinite-group-relaxation-code} was
written by the first author, C.~Y. Hong, during a Research Experience for
Undergraduates 
in summer 2013.  It was
later revised and extended by 
M.~K\"oppe 
and again by 
Y.~Zhou.  The latter added the electronic compendium
and 
code that handles the case of
discontinuous functions.  Version 0.9 of our software was released in 2014 to
accompany the survey \cite{igp_survey,igp_survey_part_2}; the software has
received continuous updates by the second and third authors since. Two
further undergraduate students contributed to our software.  P.~Xiao
contributed some documentation and tests.  M.~Sugiyama contributed additional
functions to the compendium, and added code for 
superadditive lifting functions.

\section{About SageMath as a research and education platform in optimization}
\label{s:sage}

Our software is written in Python, making use of the convenient framework of
the open-source computer algebra system SageMath \cite{sage}.  It can be run
on a local installation of SageMath, or online via \emph{SageMathCloud}.

We briefly explain what sets SageMath apart from other computer algebra
systems, making it suitable as a research platform in optimization.  First, its
surface language is the popular programming language Python, altered only in a
minimal way by the SageMath pre-parser to provide some syntactic sugar for
mathematical notation.  This in contrast to systems such as Maple and
Mathematica, which have their own idiosyncratic surface languages.  Second,
while SageMath itself is written in Python and Cython, it interfaces to a
large number of open-source and commercial software packages, and as a
principle delegates all computations to a state-of-the-art library when
possible, rather than using its own implementation.  Relevant features for
research and education in optimization are the following:
\begin{itemize}
\item the interfaces to major numerical mixed-integer linear optimization
  solvers, CPLEX, Gurobi, COIN-OR CBC, in addition to GLPK, which is used as
  the default solver, as well as an interface to the convex optimization
  solver CVXOPT;
\item an interface to the exact (rational arithmetic) mixed
  integer linear optimization solver in the Parma Polyhedra Library;
\item a didactical implementation of the simplex method that is able to work
  over general ordered fields, with a didactical implementation of
  cutting-plane methods \cite[\#18805]{sage-trac} in development\footnote{In
    the development of SageMath, each code addition and change is subject to
    (non-blind) peer review, using the ticket system Trac \cite{sage-trac}.};
\item a textbook view on the numerical solvers \cite[\#18804]{sage-trac} in
  development;  
\item interfaces to state-of-the-art polyhedral computation software,
  including the Parma Polyhedra Library, cddlib, TOPCOM, Normaliz, and
  polymake, as well as an implementation of
  polyhedral computation methods for general ordered fields;
\item interfaces to state-of-the-art lattice-point computation software, LattE
  integrale, 4ti2, and Normaliz, as well as its own implementation of
  lattice-point enumeration.
\end{itemize}

\section{Continuous and discontinuous piecewise linear $\Z$-periodic functions}
\label{s:pwl-functions}

The main objects of our code are the $\Z$-periodic functions~$\pi\colon \R\to\R$.  Our code
is limited to the case of piecewise linear functions, which are allowed to be
discontinuous; see the definition below.  
In our code, the periodicity of the functions is implicit; the functions
are represented by their restriction to the interval $[0,1]$.\footnote{The
  functions are instances of the class \sage{FastPiecewise}, which extends an existing
  SageMath class for piecewise linear functions.}
They can be constructed in various ways using Python functions named
\sage{piecewise\_function\_from\_breakpoints\_and\_values} etc.

Our software includes an electronic compendium, which is up-to-date with our
knowledge on extreme functions. It is accessible by the Python module named
\sage{extreme\_functions}.  
One can use the help system of Sage, by typing the name of an extreme function 
followed by a question mark, to access the documentation string of the
function, which provides a discussion of parameters, bibliographic information,
etc.; see \autoref{tab:pwl-funtions}. 
\autoref{tab:gmic-in-compendium} shows the source code of \sage{gmic} in the
electronic compendium.  It constructs the piecewise linear extreme function from the
breakpoints and the values at the breakpoints.  This function is the
cut-generating function of the famous Gomory mixed-integer cut
\cite{gomory1960algorithm}. 

The source code of the electronic compendium provides many other examples of
constructions of functions. An early version of the electronic compendium has been described in 
the paper \cite{zhou:extreme-notes}. The reader may also refer to the survey
\cite[Tables 1--4]{igp_survey}, which 
shows the graphs of these functions for default parameters next to their
names.  Since the publication of \cite{zhou:extreme-notes,igp_survey}, additional extreme
functions have been implemented in the compendium. They include the family of extreme
functions with an arbitrary number of slopes
(\sage{extreme\_functions.bcdsp\_arbitrary\_slope}) that was recently
constructed by Basu--Conforti--Di Summa--Paat  \cite{bcdsp:arbitrary-slopes},
the family of $\text{CPL}_3^=$ functions (\sage{mlr\_cpl3\_...}) that was
obtained by Miller--Li--Richard \cite{Miller-Li-Richard2008} from an
approximate lifting of  superadditive functions,
and new parametric families and sporadic extreme functions (\sage{kzh\_...})
that were found by K\"oppe--Zhou
\cite{koeppe-zhou:extreme-search, koeppe-zhou:param-abstract-isco} using
computer-based search. 

\begin{table}
  \caption{The construction of the \sage{gmic} function in the compendium.}
  \label{tab:gmic-in-compendium}
  \tiny 
  \begin{tabular}{@{}p{\linewidth}@{}}
    \toprule
	\begin{verbatim}
	def gmic(f=4/5, field=None, conditioncheck=True):
    	"(docstring elided to save space)"
    	if conditioncheck and not bool(0 < f < 1):
        		raise ValueError, "Bad parameters. Unable to construct the function."
    	gmi_bkpt = [0,f,1]
    	gmi_values = [0,1,0]
    	return piecewise_function_from_breakpoints_and_values(gmi_bkpt, gmi_values, field=field)
	\end{verbatim}
    \\
    \bottomrule
  \end{tabular}
\end{table}

A piecewise linear function $\pi$ can be plotted using the standard SageMath function
\sage{plot($\pi$)}, or using our function
\sage{plot\_with\_colored\_slopes($\pi$)}, which assigns a different color to
each different slope value that a linear piece takes.\footnote{See also our
  function \sage{number\_of\_slopes}.  We refer the reader to \cite[section 2.4]{igp_survey} 
  for a discussion of the number of slopes of extreme functions, 
  and \cite{bcdsp:arbitrary-slopes} and \sagefunc{bcdsp_arbitrary_slope} for the
  latest developments in this direction.}

Random piecewise linear functions can be generated by calling the Python function named \sage{random\_piecewise\_function}. By specifying the parameters, one obtains random continuous or discontinuous piecewise linear functions with prescribed properties, which could be useful to experimentation and exploration. \autoref{fig:cont_and_discont_pwl_functions}--right shows a random discontinuous function generated by 

\begin{scriptsize}
  \begin{tabular}{@{}p{\linewidth}@{}}
	\begin{verbatim}
	h = random_piecewise_function(xgrid=5, ygrid=5, continuous_proba=1/3, symmetry=True).
	\end{verbatim}
  \end{tabular}
 \end{scriptsize}
This particular discontinuous function can also be set up by either of the following commands.

\begin{scriptsize}
  \begin{tabular}{@{}p{\linewidth}@{}}
	\begin{verbatim}
	h = piecewise_function_from_breakpoints_and_limits(bkpt=[0, 1/5, 2/5, 3/5, 4/5, 1],
	        limits=[(0, 0, 0), (1, 1, 1), (2/5, 2/5, 0), (1/2, 3/5, 2/5), (3/5, 1, 3/5), (0, 0, 0)])

	h = piecewise_function_from_breakpoints_and_limits(bkpt=[0, 1/5, 2/5, 3/5, 4/5, 1], 
	        limits=[{-1:0, 0:0, 1:0}, {-1:1, 0:1, 1:1}, {-1:0, 0:2/5, 1:2/5}, 
	                {-1:2/5, 0:1/2, 1:3/5}, {-1:3/5, 0:3/5, 1:1}, {-1:0, 0:0, 1:0}])
	\end{verbatim}
  \end{tabular}
 \end{scriptsize}
\begin{figure}
\centering
\begin{minipage}{.49\textwidth}
\centering
\includegraphics[width=.8\linewidth]{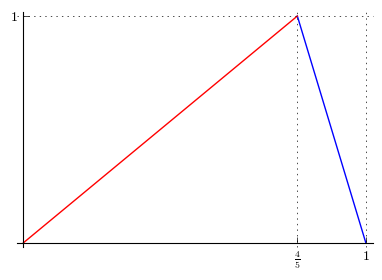}
\end{minipage}
\begin{minipage}{.49\textwidth}
\centering
\includegraphics[width=.8\linewidth]{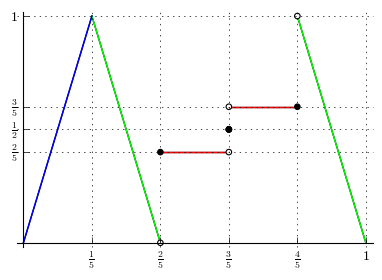}
\end{minipage}
\caption{Two piecewise linear functions, as plotted by the command
  \sage{plot\_with\_colored\_slopes(h)}. \textit{Left,} continuous extreme
  function \sage{h = gmic()}. \textit{Right,} random discontinuous function
  \sage{h = equiv5\_random\_discont\_1()}, generated by \sage{random\_piecewise\_function(xgrid=5, ygrid=5, continuous\_proba=1/3, symmetry=True)}.}
\label{fig:cont_and_discont_pwl_functions}
\end{figure}
\medbreak

In the following, we connect to the systematic notation introduced in
\cite[section 2.1]{basu-hildebrand-koeppe:equivariant}; see also 
\cite{igp_survey,igp_survey_part_2}.
We suppress the details of the internal representation of a $\Z$-periodic piecewise linear function $\pi$ in our code; instead we explain the
main ways in which the data of the function are accessed; see \autoref{tab:pwl-funtions} for a sample Sage session.

\begin{description}
\item[\sage{$\pi$.end\_points()}] is a list
  $0=x_0 < x_1 < \dots < x_{n-1} < x_n=1$ of possible
  breakpoints%
    \footnote{If the function~$\pi$ has been constructed with
      \sage{merge=True} (the default), then it is guaranteed that all end
      points $x_i$, with the possible exception of $0$ and $1$, are actual
      breakpoints of the $\Z$-periodic function~$\pi$.}
  of the function in $[0,1]$.  
  In the notation from \cite{basu-hildebrand-koeppe:equivariant,igp_survey,igp_survey_part_2}, 
  these endpoints are extended periodically as
  \begin{math}
    \B = \{\, x_0 + t, x_1 + t, \dots, x_{n-1}+t\st
    t\in\Z\,\}
  \end{math}.  Then the set of 0-dimensional faces is defined to be the collection of
  singletons, $\bigl\{\, \{ x \} \st x\in B\,\bigr\}$, 
  and the set of one-dimensional faces to be the collection of closed intervals,
  \begin{math}
    \bigl\{\, [x_i+t, x_{i+1}+t] \st i=0, \dots, {n-1} \text{ and } t\in\Z \,\bigr\}. 
  \end{math}
  Together, we obtain $\P = \P_{\B}$, a locally finite 
  polyhedral
  complex, 
  periodic modulo~$\Z$.
\item[\sage{$\pi$.values\_at\_end\_points()}] is a list 
  of the function values $\pi(x_i)$, 
  $i=0, \dots,n$. 
  This list is most useful for continuous piecewise linear functions, as 
  indicated by \sage{$\pi$.is\_continuous()}, in which case the function is
  defined on the intervals $[x_i, x_{i+1}]$ by linear interpolation.
\item[\sage{$\pi$.limits\_at\_end\_points()}] provides data for the
  general, possibly discontinuous case in the form of 
  a list \sage{limits} of 3-tuples, 
  with
  \begin{align*}
    \sage{limits[$i$][0]} &= \pi(x_i) \\
    \sage{limits[$i$][1]} &= \pi(x_i^+) = \lim_{x\to x_i, x>x_i} \pi(x)\\
    \sage{limits[$i$][-1]} &= \pi(x_i^-) = \lim_{x\to x_i, x<x_i} \pi(x). 
  \end{align*}
  The function is defined on the open intervals $(x_i, x_{i+1})$ by linear
  interpolation of the limit values $\pi(x_i^+)$, $\pi(x_{i+1}^-)$. 
\item[\sage{$\pi$($x$)} and \sage{$\pi$.limits($x$)}] evaluate the function at
  $x$ and provide the 3-tuple of its limits at $x$, respectively.
\item[\sage{$\pi$.which\_function($x$)}] returns a linear function, denoted
  $\pi_I\colon\R\to\R$ in \cite{basu-hildebrand-koeppe:equivariant,igp_survey,igp_survey_part_2},
  where $I$ is the smallest face of~$\P$ containing $x$, so $\pi(x) =
  \pi_I(x)$ for $x \in \relint(I)$. 
\end{description}

\begin{table}
  \caption{A sample Sage session, illustrating the basic use of a piecewise linear function and the help system.}
  \label{tab:pwl-funtions}
  \tiny 
  \begin{tabular}{@{}p{\linewidth}@{}}
    \toprule
	\begin{verbatim}
	## First load the code.
	sage: import igp; from igp import *
	INFO: 2016-08-08 16:49:21,594 Welcome to the infinite-group-relaxation-code. DON'T PANIC. See demo.sage for instructions.
	
	## The documentation string of each function reveals its optional arguments, usage examples, and bibliographic information. 
	sage: gmic?
	[...]
	Signature:      gmic(f=4/5, field=None, conditioncheck=True)
	Docstring:
	   Summary:
	      * Name: GMIC (Gomory mixed integer cut);
	
	      * Infinite (or Finite); Dim = 1; Slopes = 2; Continuous; Analysis of subadditive polytope method;
	
	      * Discovered [55] p.7-8, Eq.8;
	
	      * Proven extreme (for infinite group) [60] p.377, thm.3.3; (finite group) [57] p.514, Appendix 3.
	
	      * (Although only extremality has been established in literature, the same proof shows that) gmic is a facet.
	
	   Parameters:
	      f (real) in (0,1).
	
	   Examples:
	      [61] p.343, Fig. 1, Example 1
	
	         sage: logging.disable(logging.INFO)             # Suppress output in automatic tests.
	         sage: h = gmic(4/5)
	         sage: extremality_test(h, False)
	         True
	
	   Reference:
	      [55]: R.E. Gomory, An algorithm for the mixed integer problem, Tech. Report RM-2597, RAND Corporation, 1960.
	
	      [57]: R.E. Gomory, Some polyhedra related to combinatorial problems, Linear Algebra and its Application 2 (1969) 451-558.

	      [60]: R.E. Gomory and E.L. Johnson, Some continuous functions related to corner polyhedra, part II, 
	            Mathematical Programming 3 (1972) 359-389.
	
	      [61]: R.E. Gomory and E.L. Johnson, T-space and cutting planes, Mathematical Programming 96 (2003) 341-375.
	      
	## We load the GMIC function and store it in variable h.
	sage: h = gmic()
	INFO: 2016-08-08 16:51:31,048 Rational case.

	## We query the data of the GMIC function.
	sage: h.end_points()
	[0, 4/5, 1]
	sage: h.values_at_end_points()
	[0, 1, 0]
	sage: h(4/5)
	1
	sage: h.which_function(1/2)
	<FastLinearFunction 5/4*x>

	## Plot the function.
	sage: plot_with_colored_slopes(h)
	
	## Next, we construct a discontinuous piecewise linear function and query its data.
	sage: h = piecewise_function_from_breakpoints_and_limits(bkpt=[0, 1/5, 2/5, 3/5, 4/5, 1], 
		          limits=[(0, 0, 0), (1, 1, 1), (2/5, 2/5, 0), (1/2, 3/5, 2/5), (3/5, 1, 3/5), (0, 0, 0)])
	sage: h.limits_at_end_points()
	[[0, 0, 0],
	 [1, 1, 1],
	 [2/5, 2/5, 0],
	 [1/2, 3/5, 2/5],
	 [3/5, 1, 3/5],
	 [0, 0, 0]]
	\end{verbatim}
    \\
    \bottomrule
  \end{tabular}
\end{table}

\section{The diagrams of the decorated 2-dimensional polyhedral complex $\Delta\P$}
\label{s:delta-p}

We now describe certain 2-dimensional diagrams which record the
subadditivity and additivity properties of a given function.  
These diagrams, in the continuous case, have appeared extensively in
\cite{igp_survey,igp_survey_part_2,zhou:extreme-notes}.  An example for the
discontinuous case appeared in \cite{zhou:extreme-notes}.  
We have engineered these diagrams from earlier forms that can be found in
\cite{tspace} (for the discussion of the \sage{merit\_index}) and in
\cite{basu-hildebrand-koeppe:equivariant}, to become 
power tools for the modern cutgeneratingfunctionologist.
Not only is the minimality of a given function immediately apparent on the
diagram, but also the extremality proof for a given class of piecewise minimal
valid functions follows a standard pattern that draws from these diagrams.  See
\cite[prelude]{igp_survey_part_2} and \cite[sections 2 and
4]{zhou:extreme-notes} for examples of such proofs.

\subsection{The polyhedral complex and its faces}
Following
\cite{basu-hildebrand-koeppe:equivariant,igp_survey,igp_survey_part_2}, we
introduce the function 
\[\Delta\pi \colon \R \times \R \to \R,\quad \Delta\pi(x,y) =
  \pi(x)+\pi(y)-\pi(x+y),\] which measures the
slack in the subadditivity condition.\footnote{It is available in the code as
  \sage{delta\_pi($\pi$, $x$, $y$)}; in \cite{infinite}, it was called $\nabla(x,y)$.}  
Thus, if $\Delta\pi(x,y)<0$, subadditivity is violated at $(x, y)$; 
if $\Delta\pi(x,y)=0$, additivity holds at $(x,y)$; 
and if $\Delta\pi(x,y)>0$, we have strict subadditivity at $(x,y)$.
The piecewise linearity of $\pi(x)$ 
induces piecewise linearity of $\Delta\pi(x,y)$.  To express the domains of
linearity of $\Delta\pi(x,y)$, and thus domains of additivity and strict
subadditivity, we introduce the two-dimensional polyhedral complex
$\Delta\P$. 
The faces $F$ of the complex are defined as follows. Let $I, J, K \in
\P$, so each of $I, J, K$ is either a breakpoint of $\pi$ or a closed
interval delimited by two consecutive breakpoints. Then 
$$ F = F(I,J,K) = \setcond{\,(x,y) \in \R \times \R}{x \in I,\, y \in J,\, x + y \in
  K\,}.$$ 
In our code, a face is represented by an instance of the class \sage{Face}. 
It is constructed from $I, J, K$ and is represented by 
the list of vertices of $F$ and its projections $I'=p_1(F)$, $J'=p_2(F)$, $K'=
p_3(F)$,
where $p_1, p_2, p_3 \colon \R \times \R \to \R$ are defined as 
$p_1(x,y)=x$,  $p_2(x,y)=y$, $p_3(x,y) = x+y$.
The vertices $\verts(F)$ are obtained by first listing the basic solutions
$(x,y)$ where $x$, $y$, and $x+y$ are fixed to endpoints of $I$, $J$, and $K$,
respectively, and then filtering the feasible solutions. 
The three projections are then computed from the list of
vertices.%
  \footnote{We do not use the formulas for the projections given by
    \cite[Proposition 3.3]{bhk-IPCOext}, \cite[equation
    (3.11)]{igp_survey}.} 
Due to the $\Z$-periodicity of $\pi$, we can represent a face as a subset
of $[0,1]\times[0,1]$.  See \autoref{fig:construct_a_face} for an example. 
Because of the importance of the projection
$p_3(x,y)=x + y$, it is convenient to imagine a third, $(x+y)$-axis in
addition to the $x$-axis and the $y$-axis, which
traces the bottom border for $0 \leq x+y \leq 1$ and then the right border for
$1 \leq x+y \leq 2$.  To make room for this new axis, the $x$-axis should be
drawn on the top border of the diagram.
\begin{figure}[tp]
  \centering
  \includegraphics[width=.5\linewidth]{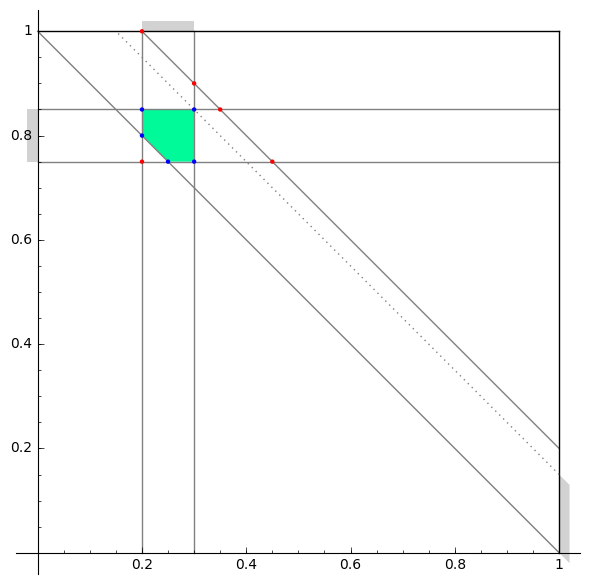}
  \caption{An example of a face $F = F(I, J, K)$ of the 2-dimensional
    polyhedral complex $\Delta\P$, set up by \sage{F = Face([[0.2, 0.3],
      [0.75, 0.85], [1, 1.2]])}.  It has vertices (\emph{blue})
    $(0.2, 0.85)$, $(0.3, 0.75)$, $(0.3, 0.85)$, $(0.2, 0.8)$, $(0.25, 0.75)$,
    whereas the other basic solutions (\emph{red})
    $(0.2, 0.75)$, $(0.2, 1)$, $(0.3, 0.9)$, $(0.35, 0.85)$, $(0.45, 0.75)$
    are filtered out because they are infeasible. 
    The face $F$ has projections (\emph{gray shadows})
    $I' = p_1(F) = [0.2, 0.3]$ (\emph{top border}), $J' = p_2(F) = [0.75,
    0.85]$ (\emph{left border}), and $K'
      = p_3(F) = [1, 1.15]$ (\emph{right border}). Note that $K'\subsetneq K$. 
  } 
  \label{fig:construct_a_face}
\end{figure}

\subsection{\sage{plot\_2d\_diagram\_with\_cones}}

We 
now 
explain the first version of the 
2-dimensional
diagrams, plotted by the function
\sage{plot\_2d\_diagram\_with\_cones($\pi$)}; see
\autoref{fig:2d_diagram_with_cones}
.
\begin{figure}[t]
\centering
\begin{minipage}{.49\textwidth}
\centering
\includegraphics[width=.9\linewidth]{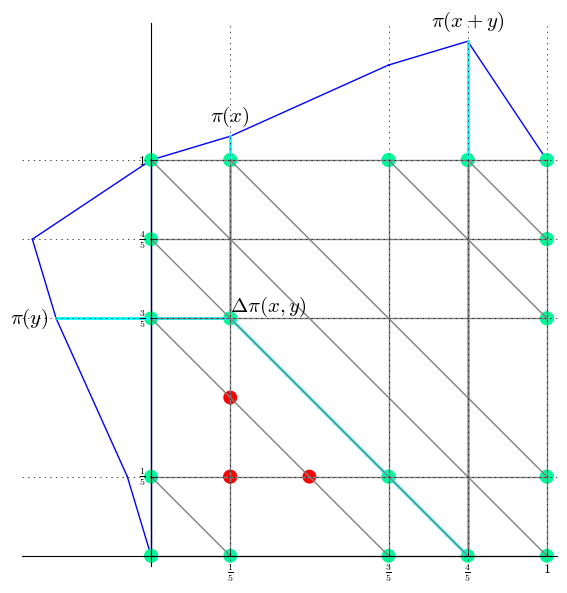}
\end{minipage}
\begin{minipage}{.49\textwidth}
\centering
\includegraphics[width=.9\linewidth]{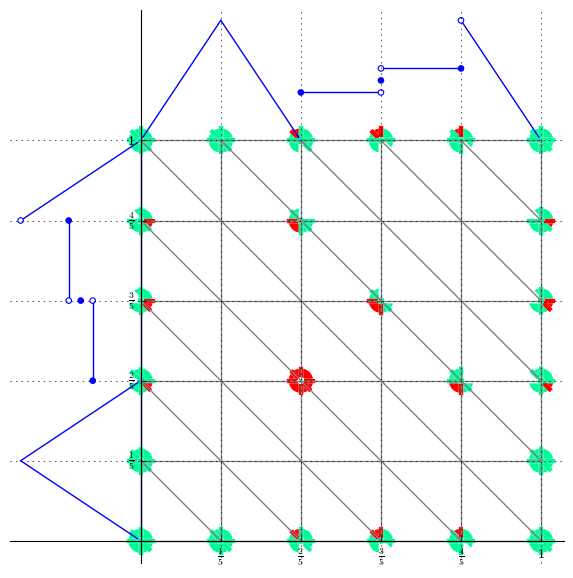}
\end{minipage}
\caption{Two diagrams of functions and their polyhedral complexes $\Delta\P$ with colored cones at $\verts(\Delta\P)$, as plotted by the command \sage{plot\_2d\_diagram\_with\_cones(h)}. \textit{Left}, continuous function \sage{h = not\_minimal\_2()}. \textit{Right}, random discontinuous function \sage{h = equiv5\_random\_discont\_1()}.}
\label{fig:2d_diagram_with_cones}
\end{figure}
At the border of these diagrams, the function $\pi$ is shown twice
(\emph{blue}), 
along the $x$-axis (\emph{top border}) and along the $y$-axis (\emph{left
  border}). 
The solid grid lines in the diagrams are determined by the breakpoints of
$\pi$: vertical, horizontal and diagonal grid lines correspond to values where
$x$, $y$ and $x+y$ are breakpoints of $\pi$, respectively. 
The vertices of the complex $\Delta\P$ are the intersections of these
grid lines. 

\textbf{In the continuous case}, 
we indicate the sign of $\Delta\pi(x,y)$ for all vertices by colored dots on
the diagram: \emph{red} indicates $\Delta\pi(x,y)<0$ (subadditivity is
violated); \emph{green} indicates $\Delta\pi(x,y)=0$ (additivity holds). 

\begin{example} 
  In \autoref{fig:2d_diagram_with_cones} (left), showing the 2-dimensional
  diagram of the function~$\pi = \sage{not\_minimal\_2()}$, the vertex
  $(x, y)=(\frac{1}{5}, \frac{3}{5})$ is marked green, since
\begin{align*}
\Delta\pi(\tfrac{1}{5},\tfrac{3}{5}) & =
                                       \pi(\tfrac{1}{5})+\pi(\tfrac{3}{5})-\pi(\tfrac{4}{5})
= \tfrac{1}{5} +\tfrac{4}{5} -1 = 0.
\end{align*}
\end{example} 

\textbf{In the discontinuous case}, beside the subadditivity slack $\Delta\pi(x,y)$ at a vertex $(x, y)$, one also needs to study the limit value of $\Delta\pi$ at the vertex $(x,y)$ approaching from the interior of a face $F \in \Delta\P$ containing the vertex $(x,y)$. This limit value is defined by
\[\Delta\pi_F(x,y) = \lim_{\substack{(u,v) \to (x,y)\\ (u,v) \in \relint(F)}}
  \Delta\pi(u,v), \quad \text{where } F \in \Delta\P \text{ such that
  } (x, y) \in F.\]  We indicate the sign of $\Delta\pi_F(x,y)$ by a colored
cone inside $F$ pointed at the vertex $(x, y)$ on the diagram. There could be
up to $12$ such cones (including rays for one-dimensional faces $F$) around a vertex $(x,
y)$.

\begin{example}
  In \autoref{fig:2d_diagram_with_cones} (right), showing the 2-dimensional
  diagram of the function $\pi = \sage{equiv5\_random\_discont\_1()}$, the lower
  right corner $(x, y)=(\frac{2}{5}, \frac{4}{5})$ of the face
  $F = F(I, J, K)$ with
  $I =[\frac{1}{5}, \frac{2}{5}]$, $J=[\frac{4}{5}, 1]$, $K=[1, \frac{6}{5}]$ is
  green, since
  \begin{align*}
    \Delta\pi_F(x,y) & = \lim_{\substack{(u,v) \to (\frac{2}{5},\frac{4}{5})\\ (u,v) \in \relint(F)}} \Delta\pi(u,v) \\
                     & = \lim_{u\to\frac{2}{5},\; u < \frac{2}{5}}\pi(u)+\lim_{v\to\frac{4}{5}, \; v > \frac{4}{5}}\pi(v)-\lim_{w\to\frac{6}{5},\; w <\frac{6}{5}}\pi(w) \\
                     & = \pi(\tfrac{2}{5}^-) +\pi(\tfrac{4}{5}^+) -\pi(\tfrac{1}{5}^-) \quad \text{(as } \pi(\tfrac{6}{5}^-) =\pi(\tfrac{1}{5}^-) \text{ by periodicity)} \\
                     & = 0 + 1 - 1 = 0.
  \end{align*}
  The horizontal ray to the left of the same vertex
  $(x, y)=(\frac{2}{5}, \frac{4}{5})$ is red, because approaching from the
  one-dimensional face $F' = F(I', J', K')$ that contains $(x,y)$, with
  $I' =[\frac{1}{5}, \frac{2}{5}]$, $J'=\{\frac{4}{5}\}$, $K'=[1, \frac{6}{5}]$,
  we have the limit value
  \[\Delta\pi_{F'}(x,y) = \lim_{\substack{(u,v) \to
        (\frac{2}{5},\frac{4}{5})\\ (u,v) \in \relint(F')}} \hspace{-1.5em}\Delta\pi(u,v) =
    \lim_{\substack{u\to\frac{2}{5} \\ u <
        \frac{2}{5}}}\pi(u)+\pi(\tfrac{4}{5})-\lim_{\substack{w\to\frac{6}{5}\\
        w <\frac{6}{5}}}\pi(w) = 0 + \tfrac{3}{5} - 1 < 0.\]
\end{example}

\subsection{\sage{plot\_2d\_diagram} and additive faces}

Now assume that $\pi$ is a subadditive function. Then there are no red dots or cones on the above diagram of the complex $\Delta\P$. 
See \autoref{fig:2d_diagrams_continuous_function} (left) and
\autoref{fig:2d_diagrams_discontinuous_function} (left) 
for illustrations of the continuous and discontinuous cases.

  \begin{figure}[tp]
    \centering
    \begin{minipage}{.49\textwidth}
      \centering
      \includegraphics[width=.9\linewidth]{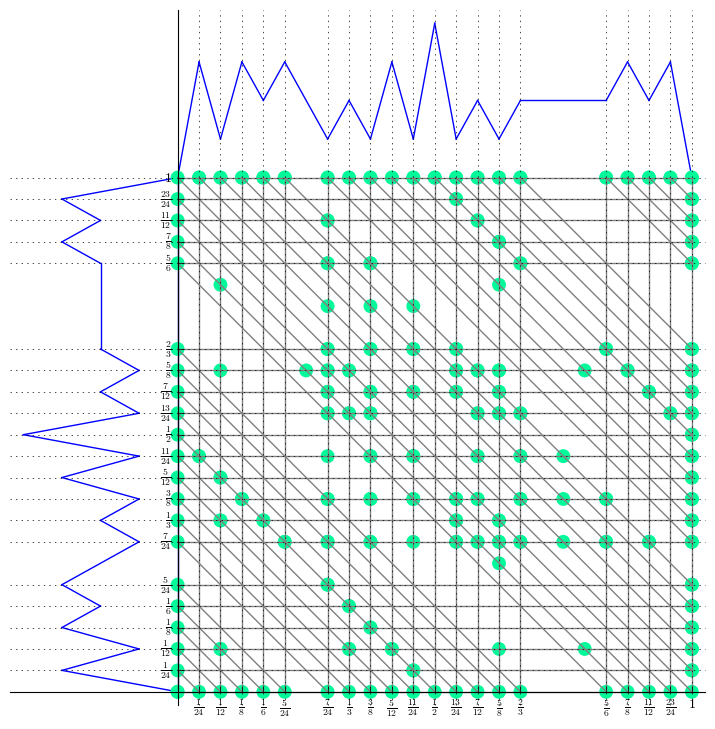}
    \end{minipage}
    \begin{minipage}{.49\textwidth}
      \centering
      \includegraphics[width=.9\linewidth]{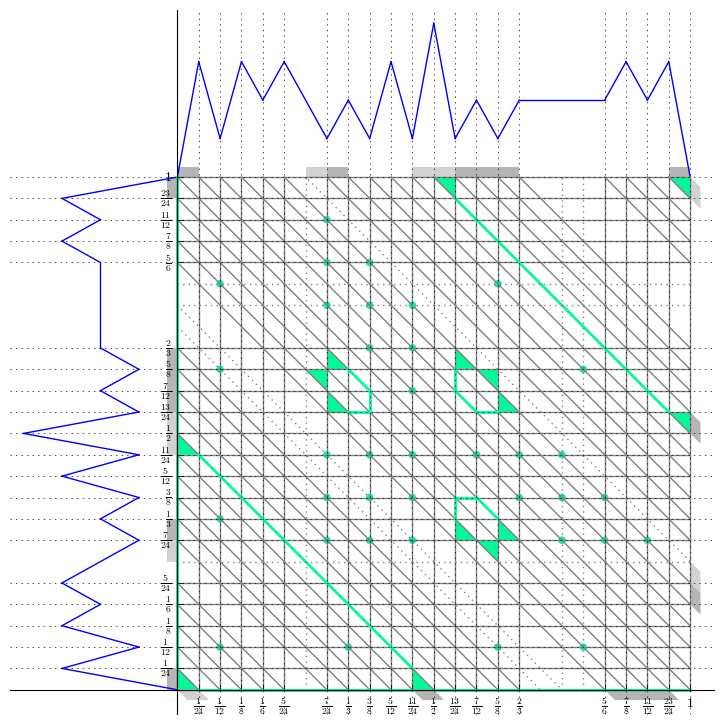}
    \end{minipage}
    \caption{Diagrams of $\Delta\P$ of a continuous function \sage{h
        = example7slopecoarse2()}, with (\textit{left}) additive vertices as
      plotted by the command \sage{plot\_2d\_diagram\_with\_cones(h)};
      (\textit{right}) maximal additive faces as plotted by the command
      \sage{plot\_2d\_diagram(h)}.}
    \label{fig:2d_diagrams_continuous_function}
  \end{figure}


\begin{figure}[tp]
\centering
\begin{minipage}{.49\textwidth}
\centering
\includegraphics[width=.9\linewidth]{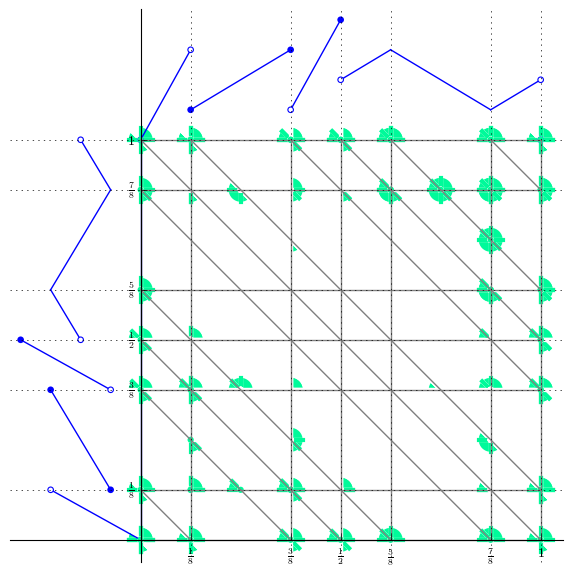}
\end{minipage}
\begin{minipage}{.49\textwidth}
\centering
\includegraphics[width=.9\linewidth]{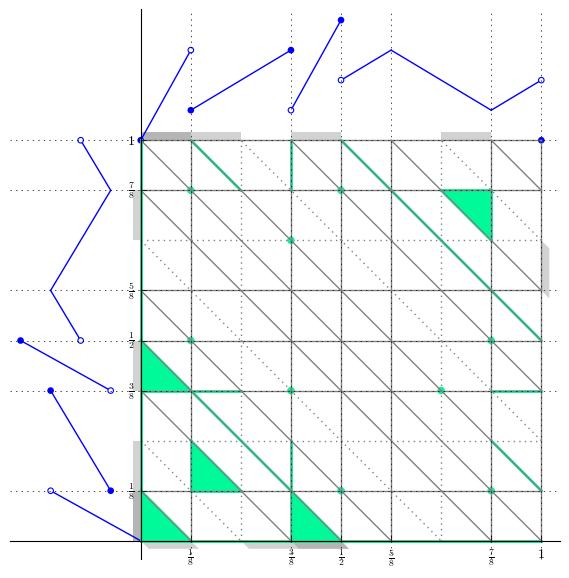}
\end{minipage}
\caption{Diagrams of $\Delta\P$ of a discontinuous function \sage{h = hildebrand\underscore discont\underscore 3\underscore slope\underscore 1()}, with (\textit{left}) additive limiting cones as plotted by the command \sage{plot\_2d\_diagram\_with\_cones(h)}; (\textit{right}) additive faces as plotted by the command \sage{plot\_2d\_diagram(h)}.}
\label{fig:2d_diagrams_discontinuous_function}
\end{figure}

\textbf{For a continuous subadditive function $\pi$}, we say that a face $F \in
\Delta\P$ is \emph{additive} if $\Delta\pi =0$ over all $F$.  Note that
$\Delta\pi$ is affine linear over $F$, and so the face $F$ is additive if and
only if $\Delta\pi(x, y) = 0$ for all $(x, y) \in \verts(F)$. 
It is clear that any subface $E$ of an additive face $F$ ($E \subseteq F$, $E
\in \Delta\P$) is still additive. 
Thus the additivity domain of~$\pi$ can be represented by the list of inclusion-maximal additive faces of $\Delta\P$; see
\cite[Lemma~3.12]{igp_survey}. This list is computed by 
  \sage{generate\_maximal\_additive\_faces($\pi$)}, whose algorithm will be explained in \autoref{sec:algo-maximal-additive-face}.

\textbf{For a discontinuous subadditive function $\pi$}, we say that a face $F
\in \Delta\P$ is \emph{additive} if $F$ is contained in a face $F'
\in \Delta\P$ such that $\Delta\pi_{F'}(x,y) =0$ for any $(x,y) \in F$.%
\footnote{Summarizing the detailed additivity and additivity-in-the-limit 
  situation of the function using the notion of additive faces is justified by 
  \autoref{thm:directly_covered} and \autoref{thm:indirectly_covered}.
}  
Since $\Delta\pi$ is affine linear in the relative interiors of each face of $\Delta\P$, the last condition is equivalent to $\Delta\pi_{F'}(x,y) =0$ for any $(x,y) \in \verts(F)$
. Depending on the dimension of $F$, we do the following.
\begin{enumerate}
\item Let $F$ be a two-dimensional face  of $\Delta\P$. If $\Delta\pi_{F}(x,y) =0$ for any $(x,y) \in \verts(F)$, then $F$ is additive. Visually on the 2d-diagram with cones, each vertex of $F$ has a green cone sitting inside $F$.
\item Let $F$ be a one-dimensional face, i.e., an edge of
  $\Delta\P$. Let $(x_1, y_1), (x_2, y_2)$ be its vertices. Besides
  $F$ itself, there are two other faces $F_1, F_2 \in \Delta\P$ that
  contain $F$. If $\Delta\pi_{F'}(x_1,y_1)=\Delta\pi_{F'}(x_2,y_2) =0$ for $F'
  = F$, $F_1$, or $F_2$, then the edge $F$ is additive. 
\item Let $F$ be a zero-dimensional face of $\Delta\P$, $F = \{(x, y)\}$. If there is a face $F' \in \Delta\P$ such that $(x,y) \in F'$ and $\Delta\pi_{F'}(x,y)=0$, then $F$ is additive.  Visually on the 2d-diagram with cones, the vertex $(x,y)$ is green or there is a green cone pointing at $(x,y)$. 
\end{enumerate}

On the diagrams in \autoref{fig:2d_diagrams_continuous_function} (right) and 
\autoref{fig:2d_diagrams_discontinuous_function} (right), the 
additive faces are shaded in green.  
The projections $p_1(F)$, $p_2(F)$, and $p_3(F)$ of a two-dimensional additive face $F$ are shown as gray shadows on the $x$-, $y$- and $(x+y)$-axes of the diagram, respectively.
These projections become important in the computation of covered intervals
(\autoref{sec:connected-covered-components} below). 


The area of the additive faces of $\pi$ is related to the notion of \emph{merit index} defined by Gomory and Johnson in \cite{tspace}. For a minimal valid function $\pi$, the merit index is defined as twice the area of the additivity domain $\{(x,y)\in [0,1]^2 \colon \Delta\pi(x,y)=0\}$ of $\pi$. This index, available as \sage{merit\_index} in our code, was proposed as a quantitative measure of strength of minimal valid functions. 
For example, the merit index of $\pi=\sage{gmic}(f)$ is $2f^2-2f+1$.

\section{Minimality test}
\label{s:minimality}
The Python function \sage{minimality\_test($\pi$, $f$)} implements a fully automatic test whether a
  given function is a minimal valid function, using the information that the
  described 2-dimensional diagrams visualize.  The algorithm is equivalent to
  the one described, in the setting of discontinuous pseudo-periodic
  superadditive functions, in Richard, Li, and Miller \cite[Theorem
  22]{Richard-Li-Miller-2009:Approximate-Liftings}.
  
Let $\pi$ be a piecewise linear $\Z$-periodic function and let $f \in [0,1]$. 
The command \sage{minimality\_test($\pi$, $f$)} 
verifies the characterization of minimal functions by Gomory--Johnson
\cite{infinite} 
that we mentioned in
the introduction, returning
\sage{True} if and only if the conditions (\ref{eq:minimal:01}--\ref{eq:minimal:subadd})
are satisfied. 

Note that because the given function $\pi$ is piecewise linear and
$\Z$-periodic, the conditions \eqref{eq:minimal:nonneg} and
\eqref{eq:minimal:symm} only need to be checked on the breakpoints (including
the limits) of $\pi$ in $[0,1]$, namely, for $x \in \{x_0, x_1,\dots, x_n,
x_0^+, x_1^+,\dots, x_n^+, x_0^-, x_1^-,\dots, x_n^-\}$. In regards to the
subadditivity condition \eqref{eq:minimal:subadd}, it suffices to check
$\Delta\pi(x,y)\geq 0$ at the vertices $(x,y)$ of the 2-dimensional polyhedral
complex $\Delta\P$ of $\pi$, including the limit values $\Delta\pi_F(x,y)$
when $\pi$ is a discontinuous function. The sign of $\Delta\pi$ is indicated
by colors on the diagram \sage{plot\_2d\_diagram\_with\_cones($\pi$)}: if the
diagram does not contain anything in red, then $\pi$ satisfies the
subadditivity condition~\eqref{eq:minimal:subadd}.  One can further restrict
to the upper-left triangular part of $\Delta\P$ where $x \leq y$, since
$\Delta\pi(x,y)=\Delta\pi(y,x)$ for any $x,y \in \R$. 

It is clear that a minimal valid function $\pi$ satisfies $\pi(f)=1$. If the value of the optional argument \sage{f} is not provided, then \sage{minimality\_test($\pi$)} uses $f =$ \sage{find\_f($\pi$)}, which returns the first breakpoint $x_i \in [0,1]$ of $\pi$ such that $\pi(x_i)=1$. Note that if a minimal valid function $\pi$ has distinct breakpoints $x_i, x_j \in [0,1]$ such that $\pi(x_i)=\pi(x_j)=1$, then by the symmetry condition \eqref{eq:minimal:symm}, there exists $b \in (0,1)$ such that $\pi(b)=0$.  In this case, the following lemma implies that $\pi$ is actually a $\frac1q\Z$-periodic function. Therefore, one can set $f$ to the first $x_i$ with $\pi(x_i)=1$ without loss of generality. 

\begin{lemma}
Let $\pi$ be a piecewise linear $\Z$-periodic function that is minimal valid. If $\pi(b) = 0$ for some $b \not\in \Z$, then $b\in \Q$ and $\pi$ is $\frac1q\Z$-periodic, where $q$ denotes the positive denominator of $b$ written as an irreducible fraction.
\end{lemma}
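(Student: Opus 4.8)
The plan is to exploit two consequences of minimality that single out the point $b$. From subadditivity~\eqref{eq:minimal:subadd} with $\pi(b)=0$ one gets
\[
  \pi(x+b)\ \le\ \pi(x)+\pi(b)\ =\ \pi(x)\qquad\text{for all }x\in\R;
\]
and from nonnegativity~\eqref{eq:minimal:nonneg} together with subadditivity, the zero set $Z=\{\,x\in\R:\pi(x)=0\,\}$ is closed under addition, since $0\le\pi(u+v)\le\pi(u)+\pi(v)=0$ whenever $u,v\in Z$. Before anything else I would replace $b$ by its fractional part, reducing to the case $0<b<1$; by $\Z$-periodicity this changes neither $\pi(b)=0$, nor the hypothesis $b\notin\Z$, nor the value of $q$.

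The first real step is to show $b\in\Q$, arguing by contradiction. If $b$ were irrational, then $\{\,mb\bmod 1 : m\in\Z_{\ge 0}\,\}$ would be dense in $[0,1]$ (classical); since $0,b\in Z$, $Z$ is closed under addition, and $\pi$ is $\Z$-periodic, this makes $Z$ dense in $\R$. As $\pi$ is piecewise linear with finitely many breakpoints per period, on the interior of each linear piece $\pi$ is an affine function that vanishes on a dense set, hence vanishes identically there; so $\pi(x)=0$ for every $x$ outside the discrete breakpoint set $\B$. Now I would invoke the symmetry condition~\eqref{eq:minimal:symm}: for any $x$ with $x\notin\B$ and $f-x\notin\B$ it forces both $\pi(x)=0$ and $\pi(x)=1-\pi(f-x)=1$, a contradiction. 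Hence $b=p/q$ in lowest terms, with $0<p<q$.

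With $b=p/q$ in hand, the periodicity follows by telescoping: since $qb=p\in\Z$, $\Z$-periodicity gives $\pi(x+qb)=\pi(x)$, and applying the inequality $\pi(\,\cdot\,+b)\le\pi(\,\cdot\,)$ exactly $q$ times yields
\[
  \pi(x)=\pi(x+qb)\le\pi(x+(q-1)b)\le\cdots\le\pi(x+b)\le\pi(x),
\]
so every inequality is an equality; in particular $\pi(x+b)=\pi(x)$, and therefore also $\pi(x-b)=\pi(x)$, for all $x$. Thus the set of translations under which $\pi$ is invariant contains $1$ and $p/q$; picking $a,c\in\Z$ with $ap+cq=1$ gives $a\cdot\tfrac pq+c=\tfrac1q$, so this set contains $\tfrac1q\Z$, i.e.\ $\pi$ is $\tfrac1q\Z$-periodic, which is the claim.

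I expect the only delicate point to be the discontinuous case within the density step: "$\pi$ vanishes on a dense set" only yields $\pi\equiv 0$ on the open linear pieces, not at the breakpoints (where $\pi$ may have a spike), so the final symmetry argument must be run at points $x$ for which both $x$ and $f-x$ lie in the interior of a piece — such $x$ exist precisely because $\B$ is discrete. Everything else is elementary bookkeeping.
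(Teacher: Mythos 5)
Your proposal is correct and follows essentially the same route as the paper: rationality of $b$ via the density of the zero set (propagated by subadditivity and nonnegativity) contradicting piecewise linearity together with the symmetry/normalization, and then $\tfrac1q\Z$-periodicity via a subadditivity squeeze combined with B\'ezout. The only (cosmetic) difference is the order of operations in the second part — you first squeeze to get invariance under translation by $b$ and then apply B\'ezout to the group of periods, whereas the paper first applies B\'ezout inside the zero set to get $\pi(\pm\tfrac1q)=0$ and then squeezes; both are valid.
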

\begin{proof}
  Since the function $\pi$ is non-negative and subadditive, one can show by induction that $\pi(nb)=0$ for any integer $n$. Suppose that $b \not\in \Q$.
The minimal valid function $\pi$ is $0$ on the set $b\Z$ and is $1$ on the set $f-b\Z$, where both sets are dense in $\R/\Z$. This contradicts the piecewise linearity of $\pi$. Therefore, $b\in\Q$.
Let $p \in \Z$ and $q \in \Z_+$ such that $\gcd(p,q)=1$ and $b =\frac{p}{q}$. 
There exists $r \in \Z_+$ such that $rp \equiv 1 \pmod q$. We have that $0 = r\pi(b) \geq \pi(rb) = \pi(\frac1q)$, and hence $\pi(\frac1q)=0$. Similarly, we have that $\pi(-\frac1q)=0$.
Let $x \in \R$. By subadditivity, $\pi(x) + \pi(\frac1q) \geq \pi(x+\frac1q)$ and $\pi(x+\frac1q) + \pi(-\frac1q) \geq \pi(x)$. Therefore, $\pi(x) = \pi(x+\frac1q)$. We conclude that the function $\pi$ is $\frac1q\Z$-periodic.
\end{proof}

If the \sage{minimality\_test} is called with the optional argument \sage{show\_plots=True} (default: \sage{False}), then it shows the diagram of $\Delta\P$ illustrating the sign of $\Delta\pi(x,y)$ by colors.

\section{Connected components of covered intervals}
\label{sec:connected-covered-components}
An additive face implies, among other things, the important covering (affine
imposing in the terminology of \cite{basu-hildebrand-koeppe:equivariant}) property that we outline in this section. The actual use of such covering property in our code enables a grid-free variant of the algorithmic results in \cite[Theorem 1.5]{basu-hildebrand-koeppe:equivariant}.

Recall that a minimal valid function $\pi$ is said to be \emph{extreme} if it cannot be written as a
convex combination of two other minimal valid functions.
We say that a function~$\tilde\pi$ is an \emph{effective perturbation function} for
the minimal valid function~$\pi$, denoted $\tilde\pi \in \tilde\Pi^{\pi}(\R,\Z)$, 
if there exists $\epsilon>0$ such that $\pi\pm\epsilon\bar\pi$ are minimal
valid functions.\footnote{The space $\tilde\Pi^{\pi}(\R,\Z)$ of effective
  perturbation functions, should not be confounded with $\bar{\Pi}^E(\R,\Z)$,
  the space of perturbation functions with prescribed additivities $E$, 
  defined in \cite{igp_survey}
  as
\[
\bar{\Pi}^E(\R,\Z) = \left\{\bar{\pi} \colon \R \to \R \, \Bigg| \,
\begin{array}{r@{\;}c@{\;}ll}
\bar{\pi}(0) &=& 0 \\
\bar{\pi}(f) &=& 0 \\
\bar{\pi}(x) + \bar{\pi}(y) &=& \bar{\pi}(x+y) & \text{ for all } (x,y) \in E\\
\bar{\pi}(x) &=& \bar{\pi}(x+t) & \text{ for all } x \in \R,\, t \in \Z
\end{array} \right\}.
\]
Let $ E = \{\,(x,y) \mid \Delta\pi(x,y) = 0\,\}$.
Then, by \cite[Theorem 3.13]{igp_survey}, if $\pi$ and $\bar\pi$ are continuous piecewise linear
and $\bar \pi \in \bar{\Pi}^E(\R,\Z)$, then $\bar\pi \in \tilde\Pi^{\pi}(\R,\Z) $.
In general, $\tilde\Pi^{\pi}(\R,\Z)$ is a subspace of $\bar{\Pi}^E(\R,\Z)$.
}
Thus a minimal valid function $\pi$ is extreme if and only if no non-zero effective
perturbation $\tilde{\pi} \in \tilde{\Pi}^{\pi}(\R,\Z)$ exists. 

The key technique for studying the space of effective perturbations is to
analyze the additivity relations.  The foundation of the technique is the
following lemma, which shows that all subadditivity conditions that are tight
(satisfied with equality) for $\pi$ are also tight for an effective
perturbation $\tilde{\pi}$.  This includes additivity in the limit.
\begin{lemma}[{\cite[Lemma 2.7]{basu-hildebrand-koeppe:equivariant}}]
\label{lemma:tight-implies-tight}
Let $\pi$ be a minimal valid function that is piecewise linear over $\P$. Let $F$ be a face of $\Delta\P$ and let $(u, v) \in F$. If $\Delta\pi_F(u,v)=0$, then $\Delta\tilde{\pi}_F(u,v)=0$ for any effective perturbation function $\tilde\pi \in \tilde\Pi^{\pi}(\R,\Z)$.
\end{lemma}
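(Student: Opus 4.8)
The plan is to use the definition of an effective perturbation directly, together with subadditivity and the linearity of the slack operator $\Delta$. By definition of $\tilde\Pi^{\pi}(\R,\Z)$, there is some $\epsilon>0$ such that both $\pi^{+}:=\pi+\epsilon\tilde\pi$ and $\pi^{-}:=\pi-\epsilon\tilde\pi$ are minimal valid functions; in particular, both satisfy the subadditivity condition \eqref{eq:minimal:subadd}, i.e.\ $\Delta\pi^{+}(x,y)\geq 0$ and $\Delta\pi^{-}(x,y)\geq 0$ for \emph{all} $(x,y)\in\R\times\R$. Since $\Delta(\cdot)$ is linear in its function argument, we have the pointwise identities $\Delta\pi^{\pm}=\Delta\pi\pm\epsilon\,\Delta\tilde\pi$, and the whole argument will be a two-sided squeeze at the point $(u,v)$, carried out in the directional limit along $\relint(F)$.

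\textbf{Key steps.}
First I would note that the directional limits $\Delta\pi_F$, $\Delta\tilde\pi_F$, and hence $\Delta\pi^{\pm}_F$, exist at $(u,v)$: for $\pi$ this is because $\Delta\pi$ is affine on $\relint(F)$ by piecewise linearity over $\P$, and for $\tilde\pi$ it is the regularity implicit in the statement $\Delta\tilde\pi_F(u,v)=0$ (equivalently, $\tilde\pi=\tfrac1{2\epsilon}(\pi^{+}-\pi^{-})$ inherits it from $\pi^{\pm}$). Granting this, passing to the limit along $\relint(F)$ respects the linear combination, so
\[
\Delta\pi^{\pm}_F(u,v)=\Delta\pi_F(u,v)\pm\epsilon\,\Delta\tilde\pi_F(u,v),
\]
and each side is a limit of a nonnegative function along $\relint(F)$, hence $\geq 0$. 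Substituting the hypothesis $\Delta\pi_F(u,v)=0$ yields simultaneously $\epsilon\,\Delta\tilde\pi_F(u,v)=\Delta\pi^{+}_F(u,v)\geq 0$ and $-\epsilon\,\Delta\tilde\pi_F(u,v)=\Delta\pi^{-}_F(u,v)\geq 0$. Since $\epsilon>0$, these two inequalities force $\Delta\tilde\pi_F(u,v)=0$, which is the claim. No separate treatment is needed according to whether $(u,v)$ lies in $\relint(F)$ or on a proper subface, because $\Delta\pi_F$ and $\Delta\tilde\pi_F$ are by definition limits through $\relint(F)$ only.

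\textbf{Main obstacle.}
The only genuinely delicate point is the existence of the directional limit of $\Delta\tilde\pi$ (equivalently of $\pi^{\pm}$) at $(u,v)$ through $\relint(F)$, which is what makes the displayed identity legitimate rather than a statement merely about $\liminf$ and $\limsup$; everything else (linearity of $\Delta$, nonnegativity of limits of nonnegative functions, the two-sided squeeze) is routine. If one wishes to avoid assuming any regularity of $\tilde\pi$ a priori, the same conclusion can be reached with outer limits: from $\Delta\pi^{+}\geq 0$ and $\Delta\pi\to 0$ one gets $\liminf_{(x,y)\to(u,v),\,(x,y)\in\relint(F)}\Delta\tilde\pi(x,y)\geq 0$, from $\Delta\pi^{-}\geq 0$ one gets the corresponding $\limsup\leq 0$, and together these show the limit exists and equals $0$.
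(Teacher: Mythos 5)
Your proposal is correct and follows essentially the same route as the paper's proof: write $\pi^{\pm}=\pi\pm\epsilon\tilde\pi$, use subadditivity of both to get $\Delta\pi^{\pm}_F(u,v)\ge 0$, and conclude $\epsilon\,|\Delta\tilde\pi_F(u,v)|\le\Delta\pi_F(u,v)=0$. Your extra care about the existence of the directional limits (and the $\liminf$/$\limsup$ fallback) is a reasonable elaboration of a point the paper passes over silently, but it does not change the argument.
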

\begin{proof}
Let $\epsilon>0$ such that $\pi^+ = \pi+\epsilon\tilde{\pi}$ and $\pi^-=\pi-\epsilon\tilde{\pi}$ are minimal valid functions.
Since $\pi^+$ and $\pi^-$ are subadditive, we have $\Delta\pi^\pm \geq 0$ and
so $\epsilon \left| \Delta\tilde\pi_F(u, v)\right| \leq \Delta\pi_F(u,v) =
0$.  Thus we have $\Delta\tilde\pi_F(u,v)=0$. 
\end{proof}


We first make use of the additivity relations that are captured by the
two-dimensional additive faces $F$ of $\Delta\P$.
Note that minimal valid functions~$\pi$ and effective perturbation functions
$\tilde{\pi} \in \tilde{\Pi}^{\pi}(\R,\Z)$ are bounded functions.  As such,
they satisfy the regularity assumptions that rule out pathological
solutions to Cauchy's functional equation; see \cite[section 4.1]{igp_survey}.  Thus the following is an immediate
corollary of the convex additivity domain lemma \cite[Theorem
4.3]{igp_survey}, a variant of the celebrated Gomory--Johnson interval lemma.
\begin{theorem}
\label{thm:directly_covered}
Let $\pi$ be a minimal valid function that is piecewise linear over $\P$. Let $F$ be a two-dimensional additive face of $\Delta\P$.  Let $\theta=\pi$ or $\theta =\tilde{\pi} \in \tilde{\Pi}^{\pi}(\R,\Z)$.
Then $\theta$ is affine with the same slope over $\intr(p_1(F))$, $\intr(p_2(F))$, and $\intr(p_3(F))$.\footnote{If the function $\pi$ is continuous, then 
$\theta$ is affine with the same slope over the closed intervals $p_1(F)$, $p_2(F)$, and $p_3(F)$, 
by \cite[Corollary 4.9]{igp_survey}.}
\end{theorem}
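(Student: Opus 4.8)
The plan is to obtain the statement as a direct application of the convex additivity domain lemma \cite[Theorem 4.3]{igp_survey} --- the interval-lemma variant recalled just above --- after checking its two hypotheses for $\theta$ on $F$: boundedness of $\theta$, and vanishing of $\Delta\theta_F$ on all of $F$.

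First I would record some elementary facts. Since $F$ is two-dimensional, each projection $p_i(F)$, $i=1,2,3$, is a nondegenerate closed interval, so $\intr(p_i(F))$ is its nonempty relative interior; moreover, writing $F = F(I,J,K)$, one has $\relint(p_1(F)) \subseteq \relint(I)$, and likewise for $J$ and $K$. For boundedness: a minimal valid function satisfies $0 \le \pi \le 1$ (from nonnegativity and the symmetry condition $\pi(x)+\pi(f-x)=1$), and an effective perturbation satisfies $\tilde\pi = \tfrac{1}{2\epsilon}\bigl((\pi+\epsilon\tilde\pi) - (\pi-\epsilon\tilde\pi)\bigr)$, a difference of two bounded functions, hence is bounded. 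So in both cases $\theta$ meets the regularity requirement of the interval lemma.

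Next I would show $\Delta\theta_F(u,v) = 0$ for every $(u,v) \in F$. For $\theta = \pi$ this is exactly the hypothesis that $F$ is an additive face of $\Delta\P$. For $\theta = \tilde\pi \in \tilde\Pi^\pi(\R,\Z)$, fix $(u,v) \in F$; then $\Delta\pi_F(u,v) = 0$ by additivity of $F$, so \autoref{lemma:tight-implies-tight} gives $\Delta\tilde\pi_F(u,v) = 0$. Either way $\Delta\theta_F \equiv 0$ on the full-dimensional convex polytope $F$. Applying \cite[Theorem 4.3]{igp_survey} to $\theta$ and $F$ then yields that $\theta$ is affine, with one common slope, on $\intr(p_1(F))$, $\intr(p_2(F))$, and $\intr(p_3(F))$, as claimed.

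The point that I expect to demand the most care is the discontinuous case: the hypothesis fed to the interval lemma is additivity \emph{in the limit}, $\Delta\theta_F = 0$, rather than pointwise additivity, and one must make sure the invoked lemma is formulated to accept this (it is --- that is precisely the role of the convex additivity domain lemma). One should also note that for $\theta = \tilde\pi$ the interval lemma is genuinely needed, since an effective perturbation is a priori only bounded and not known to be piecewise linear over $\P$; for $\theta = \pi$ one could instead argue by comparing the affine pieces $\pi_I$, $\pi_J$, $\pi_K$ on the open set $\relint(F)$. (In the continuous case the conclusion strengthens from open to closed projection intervals, via \cite[Corollary 4.9]{igp_survey}, as indicated in the footnote.)
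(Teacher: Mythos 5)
Your proposal is correct and follows essentially the same route as the paper, which presents the theorem as an immediate corollary of the convex additivity domain lemma \cite[Theorem 4.3]{igp_survey} after noting that $\pi$ and any $\tilde\pi\in\tilde\Pi^{\pi}(\R,\Z)$ are bounded (hence satisfy the regularity hypotheses) and that additivity of $F$ transfers to $\tilde\pi$ via \autoref{lemma:tight-implies-tight}. You have merely written out explicitly the verifications the paper leaves implicit, and done so correctly.
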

In the situation of this result, we say that the intervals $\intr(p_1(F))$,
$\intr(p_2(F))$, and $\intr(p_3(F))$ are \emph{(directly) covered}\footnote{In
  the terminology of \cite{basu-hildebrand-koeppe:equivariant}, these
  intervals are said to be \emph{affine imposing}.} 
and are in the same \emph{connected covered component}\footnote{Connected covered components, extending
  the terminology of \cite{basu-hildebrand-koeppe:equivariant}, are simply
  collections of intervals on which an effective perturbation function is affine with the same
  slope.  This notion of connectivity is unrelated to that in the topology of
  the real line, but is understood in a graph-theoretic sense. 
  In the grid-based algorithm in~\cite{basu-hildebrand-koeppe:equivariant}, 
  notation for an explicit graph whose nodes are the breakpoint intervals is
  introduced.
  In the present paper, we do not introduce such a graph explicitly.}.
If an interval is contained in a connected covered component, then all of its sub-intervals are also contained in this connected covered component. In particular, any sub-interval of a covered interval is also covered.

\smallskip

Now let $F$ be a one-dimensional additive face (edge) of
$\Delta\P$. The edge $F$ can be vertical, horizontal, or
diagonal. Then two of the projections $\intr(p_1(F)), \intr(p_2(F))$ and
$\intr(p_3(F))$\footnote{The closed intervals $p_1(F), p_2(F)$ and $p_3(F)$
  are considered when $\pi$ is a continuous function.} are
one-dimensional (proper intervals), whereas the third projection is a
singleton. 
The following theorem holds, 
which is akin to 
\cite[Lemma 4.5]{basu-hildebrand-koeppe:equivariant}.

\begin{theorem}
\label{thm:indirectly_covered}
Let $\pi$ be a minimal valid function that is piecewise linear over $\P$. 
Assume that $\pi$ is at least one-sided continuous at the
origin.\footnote{Note that \autoref{thm:indirectly_covered} holds as well when
  the function $\pi$ is two-sided discontinuous at the origin. If all the
  breakpoints of $\pi$ are rational numbers, it is justified by \cite[Lemma
  4.5]{basu-hildebrand-koeppe:equivariant}. In the general case, the proof of
  the theorem needs a stronger regularity lemma than
  \autoref{corollay:perturbation-lim-exist}, which we defer to the forthcoming
  paper \cite{koeppe-zhou:crazy-perturbation}.}
Let $F$ be a one-dimensional additive face (edge) of $\Delta\P$.
Let $\{i,j\} \subset \{1,2,3\}$ such that $p_i(F)$ and $p_j(F)$ are proper intervals. 
Let $E \subseteq F$ be a sub-interval. For $\theta=\pi$ or $\theta =\tilde{\pi} \in \tilde{\Pi}^{\pi}(\R,\Z)$, if $\theta$ is affine in $I = \intr(p_i(E))$, then $\theta$ is affine in $I' = \intr(p_j(E))$ as well with the same slope.
\end{theorem}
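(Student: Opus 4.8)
The plan is to convert the datum that $F$ is an additive edge into a single one-variable functional equation along $F$, and then read affinity and slope off of it. By the definition of additivity for one-dimensional faces, there is a face $F' \in \{F, F_1, F_2\}$, where $F_1$ and $F_2$ are the two $2$-dimensional faces of $\Delta\P$ containing $F$, with $\Delta\pi_{F'}$ vanishing at both vertices of $F$; since $\Delta\pi_{F'}$ is affine over $F'$ and $F \subseteq F'$, it vanishes on all of $F$, in particular on $E$. Applying \autoref{lemma:tight-implies-tight} to $F'$ at every point of $E$ then gives $\Delta\theta_{F'} \equiv 0$ on $E$, both for $\theta = \pi$ and for $\theta = \tilde\pi \in \tilde\Pi^{\pi}(\R,\Z)$. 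Hence it suffices to analyze this single identity.

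Next I would parametrize the segment $E$ affinely by some $\gamma\colon[0,1]\to E$ and use the identity $p_3 = p_1 + p_2$ on $\R\times\R$: along $E$ exactly one of $p_1\circ\gamma,\,p_2\circ\gamma,\,p_3\circ\gamma$ is constant, namely the one indexed by $k$ with $\{i,j,k\}=\{1,2,3\}$ (the projection with singleton image), while the other two, $\xi = p_i\circ\gamma$ and $\eta = p_j\circ\gamma$, are affine bijections $[0,1]\to p_i(E)$ and $[0,1]\to p_j(E)$; they are related by a translation $\eta = \xi + \text{const}$ in the vertical and horizontal cases, and by a reflection $\eta = \text{const} - \xi$ in the diagonal case. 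Writing $\Delta\theta_{F'} = \theta\circ p_1 + \theta\circ p_2 - \theta\circ p_3$, each term interpreted as the one-sided limit of $\theta$ dictated by whether $F'$ is $F$, $F_1$, or $F_2$, and moving the constant term (a fixed one-sided limit of $\theta$ at the singleton coordinate) to the other side, the identity $\Delta\theta_{F'}\equiv 0$ on $E$ takes the form $\theta\circ\eta = \sigma\,(\theta\circ\xi) + C$ with $C$ constant and a sign $\sigma\in\{+1,-1\}$ — and crucially $\sigma = -1$ occurs exactly in the diagonal case, i.e., exactly when $\eta\leftrightarrow\xi$ reverses orientation. Granting (see the next paragraph) that on the breakpoint-free open intervals $I = \intr(p_i(E))$ and $I' = \intr(p_j(E))$ — which lie inside single cells of $\P$ — this equation holds with $\theta$ itself in place of its one-sided decorations, affinity of $\theta$ with slope $s$ over $I$ makes $\theta\circ\xi$, hence $\theta\circ\eta$, an affine function of the parameter, so $\theta$ is affine over $I' = \eta((0,1))$; chasing the slope through the affine bijections $\xi,\eta$ and the sign $\sigma$ yields slope $s$ again, because the orientation sign of the reparametrization and $\sigma$ are both $-1$ in the diagonal case and both $+1$ otherwise, and so cancel.

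The hard part is the limit bookkeeping elided in the previous paragraph: I must ensure that all the one-sided limits of $\theta$ entering $\Delta\theta_{F'}$ are well defined and match up consistently, so that the functional equation genuinely transfers \emph{affinity} to the open interval $I'$ and does not merely constrain some one-sided decoration of $\theta$ there. The regularity corollary \autoref{corollay:perturbation-lim-exist} supplies the first point: an effective perturbation $\tilde\pi$ has one-sided limits everywhere, so the slacks $\Delta\tilde\pi_{F'}$ and the constant $C$ make sense. The hypothesis that $\pi$ is at least one-sided continuous at the origin is what makes the matching clean: it excludes the degenerate situations — such as a coordinate of $E$ meeting $0$ while $F'$ is two-dimensional, so that the $p_1$- and $p_3$-terms of $\Delta\theta_{F'}$ would be taken from opposite sides of $F$ — in which the one-sided limits on the two sides fail to assemble into a single affine relation, since then $\theta(0^\pm)$ could differ from $\theta(0)=0$. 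With the limits under control, the argument finishes along the same lines as \cite[Lemma~4.5]{basu-hildebrand-koeppe:equivariant}; in the genuinely two-sided discontinuous case this cleanup is unavailable and one must instead invoke the stronger regularity lemma of \cite{koeppe-zhou:crazy-perturbation}, as anticipated in the footnote to the theorem.
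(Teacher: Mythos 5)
Your proposal is correct and follows essentially the same route as the paper's proof: both reduce the additive edge to the identity $\Delta\theta_{F'}\equiv 0$ on $E$ via \autoref{lemma:tight-implies-tight}, use \autoref{corollay:perturbation-lim-exist} to extract the one-sided limit at the singleton projection as a constant, and then read affinity and the (sign-cancelling, in the diagonal case) slope off the resulting translation or reflection equation $\theta(x+t)=\theta(x)+\ell$ or $\theta(x)+\theta(y)=\ell$; your parametrized treatment with the sign $\sigma$ merely unifies the three cases the paper handles separately. The only quibble is that your third paragraph slightly misattributes the role of the one-sided continuity hypothesis — it is needed precisely as the hypothesis of \autoref{corollay:perturbation-lim-exist} (via \autoref{lemma:perturbation-lipschitz-continuous}) to guarantee regularity of $\tilde\pi$, not to rule out a separate "matching" degeneracy — but this does not affect the validity of the argument.
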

In the situation of the theorem, the two proper intervals $p_i(F)$ and
$p_j(F)$ are said to be \emph{connected} through a translation 
(when $F$ is a vertical or horizontal edge) or through a reflection (when $F$
is a diagonal edge). An interval $I'$ that is connected to a covered interval
$I$ said to be \textit{(indirectly) covered} and in the same connected
component as $I$.

Before proving \autoref{thm:indirectly_covered}, we first discuss some important regularity results.
Assume that $\pi$ is at least one-sided continuous at the origin, from the left or from the right. 
Dey--Richard--Li--Miller \cite[Theorem 2]{dey1} (see also \cite[Lemma 2.11 (v)]{igp_survey}) showed that 
any effective perturbation function $\tilde\pi \in \tilde\Pi^{\pi}(\R,\Z)$ is continuous at all points at which $\pi$ is continuous. In fact, the result can be strengthened to Lipschitz continuous, as follows.
\begin{lemma}
\label{lemma:perturbation-lipschitz-continuous}
Let $\pi$ be a piecewise linear minimal valid function that is continuous from the right at $0$ or continuous from the left at $1$. If $\pi$ is continuous on a proper interval $I \subseteq [0,1]$, then for any $\tilde{\pi} \in \tilde{\Pi}^{\pi}(\R,\Z)$ we have that $\tilde{\pi}$ is Lipschitz continuous on the interval $I$.
\end{lemma}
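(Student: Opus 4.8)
The plan is to reduce first to the case where $\pi$ is continuous from the right at $0$. Indeed, continuity from the left at $1$ is the same as continuity from the left at $0$ (by $\Z$-periodicity), and the reflection $x\mapsto -x$ carries a minimal valid function $\pi$ to the minimal valid function $\hat\pi(x)=\pi(-x)$ and an effective perturbation $\tilde\pi\in\tilde\Pi^{\pi}(\R,\Z)$ to $\hat{\tilde\pi}(x)=\tilde\pi(-x)\in\tilde\Pi^{\hat\pi}(\R,\Z)$, turning the hypothesis of left-continuity at $0$ into right-continuity at $0$, and turning Lipschitz continuity on $I$ into Lipschitz continuity on $-I$. So assume from now on that $\pi$ is continuous from the right at $0$. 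Since $\pi$ is piecewise linear over $\P$ and $\pi(0)=0$, it follows that $\pi(t)=ct$ for $t\in[0,x_1)$, where $x_1>0$ is the first breakpoint and $c\ge 0$.

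The heart of the argument is to get Lipschitz control of $\tilde\pi$ near the origin out of \autoref{thm:directly_covered}. The triangle $F_0=F([0,x_1],[0,x_1],[0,x_1])=\{(x,y)\st x,y\ge 0,\ x+y\le x_1\}$ is a two-dimensional face of $\Delta\P$, and it is additive: on $\relint(F_0)$ one has $\Delta\pi(u,v)=cu+cv-c(u+v)=0$, so $\Delta\pi_{F_0}\equiv 0$ on $F_0$. Applying \autoref{thm:directly_covered} to $F_0$ with $\theta=\tilde\pi$ shows that $\tilde\pi$ is affine over $\intr(p_1(F_0))=(0,x_1)$, say $\tilde\pi(t)=a't+b'$ on $(0,x_1)$. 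Let $\epsilon>0$ witness $\tilde\pi\in\tilde\Pi^{\pi}(\R,\Z)$, so that $\pi^{\pm}:=\pi\pm\epsilon\tilde\pi$ are minimal valid, hence nonnegative; letting $t\to 0^+$ in $\pi^{\pm}(t)=(c\pm\epsilon a')t\pm\epsilon b'\ge 0$ forces $b'=0$. Therefore $\tilde\pi(t)=a't$ for $t\in[0,x_1)$, and in particular $|\tilde\pi(t)|\le |a'|\,t$ for $0\le t<x_1$.

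Next I would propagate this estimate to $I$ by a standard subadditivity argument (the Dey--Richard--Li--Miller technique). For any $x\in\R$ and $t>0$, subadditivity of $\pi^{+}$ and of $\pi^{-}$ gives $\pi^{\pm}(x+t)-\pi^{\pm}(x)\le\pi^{\pm}(t)$; writing out the two inequalities in terms of $\pi$ and $\tilde\pi$, combining them, and using $\Delta\pi(x,t)\ge 0$, one obtains
\[
  \bigl|\,(\tilde\pi(x+t)-\tilde\pi(x))-\tilde\pi(t)\,\bigr|\ \le\ \tfrac1\epsilon\,\Delta\pi(x,t).
\]
Now fix $x,x+t\in I$ with $0<t<x_1$. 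Since $\pi$ is piecewise linear and continuous on $I$, its restriction $\pi|_I$ is Lipschitz with some constant $L_I$, so $|\pi(x+t)-\pi(x)|\le L_I t$; together with $\pi(t)=ct$ this gives $0\le\Delta\pi(x,t)=ct-(\pi(x+t)-\pi(x))\le (c+L_I)t$. Combining with $|\tilde\pi(t)|\le |a'|t$ from the previous step yields
\[
  |\tilde\pi(x+t)-\tilde\pi(x)|\ \le\ \Bigl(|a'|+\tfrac{c+L_I}{\epsilon}\Bigr)\,t\ =:\ M\,t,
\]
with $M$ independent of $x$ and $t$. Finally, for arbitrary $x<x'$ in $I$, subdivide $[x,x']\subseteq I$ into steps shorter than $x_1$ and sum the estimate along the subdivision (all intermediate points lie in $I$ because $I$ is an interval); this gives $|\tilde\pi(x')-\tilde\pi(x)|\le M(x'-x)$, so $\tilde\pi$ is Lipschitz on $I$ with constant $M$.

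I expect the only genuinely nontrivial point to be the second step: recognizing that one-sided continuity at the origin manufactures a truly two-dimensional additive face $F_0$ abutting the origin, so that \autoref{thm:directly_covered} already forces $\tilde\pi$ to be affine near $0$. Everything afterwards is the classical subadditivity estimate; the only mild care needed is (i) that one works throughout with positive increments $t$, so that limits from the left at $1$ (and hence any two-sided-discontinuity pathologies) never enter, and (ii) that the constant $M$ is chosen uniformly so the chaining across $I$ is valid.
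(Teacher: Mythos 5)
Your proof is correct and follows essentially the same route as the paper's: one-sided continuity at the origin gives $\pi(t)=ct$ on an initial segment, the Interval Lemma (which you invoke in the packaged form of \autoref{thm:directly_covered} applied to the additive triangle at the origin, whereas the paper applies the Gomory--Johnson Interval Lemma directly to $\tilde\pi(x)+\tilde\pi(y)=\tilde\pi(x+y)$ on $[0,b]^2$) forces $\tilde\pi$ to be linear there, and subadditivity of $\pi\pm\epsilon\tilde\pi$ then converts this into a uniform bound on increments of $\tilde\pi$ over $I$. The only substantive difference is that you make explicit the restriction to small increments $t<x_1$ and the subsequent chaining across $I$, a point the paper's proof leaves implicit when it writes $\pi^{\pm}(x)-\pi^{\pm}(y)\leq s^{\pm}(x-y)$ without noting that this requires $x-y\in[0,b]$.
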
 
\begin{proof}
Without loss of generality, we assume that $\pi$ is continuous from the right at $0$. 
Then,  since $\pi$ is also piecewise linear and $\pi(0)=0$, 
there exist positive $s, b\in \R$ such that $\pi(x) = s x$ for $x \in [0, 2b]$. 
Let $\tilde{\pi} \in \tilde{\Pi}^\pi(\R,\Z)$  be an effective perturbation.
By definition, there exists $\epsilon>0$ such that $\pi^+ = \pi+\epsilon\tilde{\pi}$ and $\pi^-=\pi-\epsilon\tilde{\pi}$ are minimal valid functions.
For all $x,y \in [0,b]$,  we have that $\pi(x)+\pi(y)=\pi(x+y)$. 
Since the functions $\pi^{+}$ and $\pi^{-}$ are subadditive,
$\tilde\pi(x)+\tilde\pi(y)=\tilde\pi(x+y)$ for all $x,y \in [0,b]$. 
Note that $\tilde\pi(0)=0$. By the Gomory--Johnson Interval Lemma
\cite[Lemma~4.1]{igp_survey}, there exists $\tilde{s} \in \R$ such that
$\tilde{\pi}(x) = \tilde{s}x$ for $x \in [0,b]$.  Then $\pi^+$ and $\pi^-$ have slopes $s^+ := s+\epsilon\tilde s$ and $s^- := s-\epsilon\tilde s$ on $[0, b]$, respectively. 

Let $I \subseteq [0,1]$ be an interval where $\pi$ is continuous.  Let $x, y \in I$ such that $x>y$. 
Then there exists $s_I \in \R$ such that $\pi(x)-\pi(y) \geq s_I(x-y)$, since
$\pi$ is piecewise linear and continuous on~$I$. 
By subadditivity, we have $\pi^+(x)-\pi^+(y) \leq s^+(x-y)$ and  $\pi^-(x)-\pi^-(y) \leq s^-(x-y)$.
It follows from $\epsilon\tilde{\pi} = \pi^+ - \pi = \pi - \pi^-$
that $(s_I-s^-)(x-y) \leq \epsilon(\tilde{\pi}(x)-\tilde{\pi}(y))\leq (s^+ - s_I)(x-y)$.
Therefore, $\left| \tilde\pi(x)-\tilde\pi(y)\right| \leq C \left| x-y\right|$, where $C = \frac{1}{\epsilon}\max(\left| s^+ - s_I\right|, \left| s^- - s_I \right|)$ is a constant independent of $x, y \in I$.

We conclude that $\tilde{\pi}$  is Lipschitz continuous on the interval~$I$.
\end{proof}
Since the function $\pi$ is continuous on each interval $(x_i, x_{i+1})$ between two consecutive breakpoints, \autoref{lemma:perturbation-lipschitz-continuous} implies the important corollary below.
\begin{corollary}
\label{corollay:perturbation-lim-exist}
Let $\pi$ be a piecewise linear minimal valid function that is continuous from the right at $0$ or continuous from the left at $1$. Let $\tilde{\pi} \in \tilde{\Pi}^{\pi}(\R,\Z)$. Then the limit values $\tilde\pi(x^-)$ and $\tilde\pi(x^+)$  are well defined for any $x \in [0,1]$.
\end{corollary}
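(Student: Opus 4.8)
The plan is to obtain this as an immediate consequence of \autoref{lemma:perturbation-lipschitz-continuous}, using the standard fact that a Lipschitz (hence uniformly continuous) function on an open interval has one-sided limits at both endpoints. First I would record that, since $\pi$ is piecewise linear over $\P$, it is affine, and in particular continuous, on each of the finitely many open intervals $(x_i, x_{i+1})$ determined by consecutive breakpoints of the list $0 = x_0 < x_1 < \dots < x_n = 1$. Each such interval is a proper interval contained in $[0,1]$, so \autoref{lemma:perturbation-lipschitz-continuous} applies (its hypothesis is exactly the one assumed here) and yields a constant $C_i$ with $|\tilde\pi(u) - \tilde\pi(v)| \le C_i |u-v|$ for all $u,v \in (x_i, x_{i+1})$.

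Next, for a point $x \in (0,1)$ I would express $\tilde\pi(x^-)$ as a limit taken inside the open interval of continuity immediately to the left of $x$ (the one having $x$ as its right endpoint, or, if $x$ is not a breakpoint, the interval containing $x$), and $\tilde\pi(x^+)$ as a limit inside the interval immediately to the right. Since $\tilde\pi$ is Lipschitz on that interval, the image under $\tilde\pi$ of any sequence approaching $x$ from the relevant side is Cauchy, hence convergent, and a routine interleaving argument shows the limit is independent of the chosen sequence. This is just extension by uniform continuity and requires no new ideas.

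Finally, for $x \in \{0,1\}$ I would invoke the $\Z$-periodicity of $\tilde\pi \in \tilde\Pi^{\pi}(\R,\Z)$: one has $\tilde\pi(0^-) = \tilde\pi(1^-)$ and $\tilde\pi(0^+) = \tilde\pi(1^+)$, so it suffices to show that $\tilde\pi(1^-)$ and $\tilde\pi(0^+)$ exist, and these follow from the previous paragraph applied to the open intervals $(x_{n-1}, 1)$ and $(0, x_1)$ respectively, on which the Lipschitz estimate is available.

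The argument is essentially bookkeeping once \autoref{lemma:perturbation-lipschitz-continuous} is in hand; the only point that deserves a moment's care is ensuring that, at breakpoints and at the endpoints $0$ and $1$, the one-sided limits are genuinely being evaluated inside intervals on which $\pi$ is continuous so that the Lipschitz bound is in force — which is precisely why the periodicity reduction at $0$ and $1$ is carried out.
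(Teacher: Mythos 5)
Your proposal is correct and follows essentially the same route as the paper, which states the corollary as an immediate consequence of \autoref{lemma:perturbation-lipschitz-continuous} applied to the open intervals between consecutive breakpoints, where $\pi$ is continuous. Your write-up simply fills in the standard extension-by-uniform-continuity details (and the periodicity bookkeeping at $0$ and $1$) that the paper leaves implicit.
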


We now provide a proof to \autoref{thm:indirectly_covered}.
 
\begin{proof}[Proof of \autoref{thm:indirectly_covered}]
Assume that $F$ is an horizontal additive edge of $\Delta\P$, with
$p_2(F)=\{t\}$ and $p_1(F), p_3(F)$ being proper intervals. There exists a
face $F' \in \Delta\P$ containing $F$, such that $\Delta\pi_{F'}(x, t)=0$ for
any $x \in p_1(F)$. Consider $\theta=\pi$ or $\theta =\tilde{\pi}$. By
\autoref{lemma:tight-implies-tight}, $\Delta\theta_{F'}(x,t)=0$ for $x \in
p_1(F)$. By \autoref{corollay:perturbation-lim-exist}, the values $\theta(t),
\theta(t^+), \theta(t^-)$ are well defined, and hence the limit value $\ell :=\lim_{\substack{y \to t \\ y \in \relint(p_2(F'))}} \theta(y)$ exists.
 Assume that the function $\theta$ is affine in $I = \intr(p_1(E))$ for some interval $E \subseteq F$ with slope $c$, i.e., $\theta(x) = cx+b$ for $x \in I$, where $b, c \in \R$.  Then for any $x \in I \subseteq \intr(p_1(F))$, we have $0 = \Delta\theta_{F'}(x,t) = \theta(x)+\ell - \theta(x+t)$. Denote $z=x+t$ and $I' = \intr(p_3(E))$, then $x \in I$ for $z \in I'$. Therefore, $\theta(z) =  \theta(z-t)+\ell=c(z-t)+b+\ell$ for $z \in I'$. We see that $\theta(z)$ is affine with the same slope $c$ on the interval $I'$ as well.  Now assume that $\theta(z) = c'z+b'$ for $z \in \intr(p_3(E'))$, where $E' \subseteq F$ and $b', c' \in \R$.  Then by the same argument, $\theta(x) = c'(x+t)+b'-\ell$ for $x \in \intr(p_1(E'))$, showing that $\theta$ is affine in $\intr(p_1(E'))$ as well with the same slope. 

If $F$ is an vertical additive edge of $\Delta\P$, the result follows from swapping $x$ and $y$. 

Assume that $F$ is a diagonal additive edge of $\Delta\P$, with $p_3(F)=\{r\}$ and $p_i(F), p_j(F)$ ($\{i,j\}=\{1,2\}$) being proper intervals.  There exists a face $F' \in \Delta\P$ containing $F$, such that $\Delta\pi_{F'}(x, y)=0$ for any $(x,y)\in F$. Consider $\theta=\pi$ or $\theta =\tilde{\pi}$. By \autoref{lemma:tight-implies-tight}, $\Delta\theta_{F'}(x,y)=0$ for $(x, y) \in F$. \autoref{corollay:perturbation-lim-exist} implies that the values $\theta(r), \theta(r^+), \theta(r^-)$ are well defined, and thus, $\ell :=\lim_{\substack{z \to r \\ z \in \relint(p_3(F'))}} \theta(z) $ exists. We have that $\Delta\theta_{F'}(x,y)= \theta(x)+\theta(y) -\ell = 0$, hence $\theta(x)+\theta(y) =\ell$ for $(x, y)\in \relint(F)$. Therefore, if $\theta$ is affine in $I = \intr(p_i(E))$ for some interval $E \subseteq F$, then $\theta$ is affine also in $I' = \intr(p_j(E))$ with the same slope.
\end{proof}

The connected components of covered intervals of $\pi$ are computed in two phases, by calling \sage{generate\_covered\_components($\pi$)}.
Recall that within a connected covered component, the function~$\pi$, and any perturbation $\tilde\pi$, is affine linear with the same slope. 

In phase one, the program computes the connected components of directly covered intervals of $\pi$ according to \autoref{thm:directly_covered}, using the two-dimensional additive faces of $\Delta\P$. 
Let $F \in \Delta\P$ be a two-dimensional additive face. Then the intervals  $\intr(p_1(F)), \intr(p_2(F))$ and $\intr(p_3(F)) \bmod 1$ are in the same connected covered component
$\mathcal{C} = \intr(p_1(F)) \cup \intr(p_2(F)) \cup \bigl( \intr(p_3(F)) \bmod 1 \bigl)$.
(Consider $\mathcal{C} = p_1(F) \cup p_2(F) \cup \bigl(p_3(F) \bmod 1 \bigr)$ instead, when $\pi$ is a continuous function.)
If another connected covered component $\mathcal{C}'$ satisfies that $\intr(\mathcal{C}) \cap \intr(\mathcal{C}') \neq \emptyset$, then the program merges $\mathcal{C}'$ into $\mathcal{C}$ to form a large connected covered component  $\mathcal{C}_{\mathrm{new}} \leftarrow \mathcal{C} \cup \mathcal{C}'$, and sets $\mathcal{C}' \leftarrow \emptyset$.

\begin{figure}[h]
\centering
\includegraphics[width=.32\linewidth]{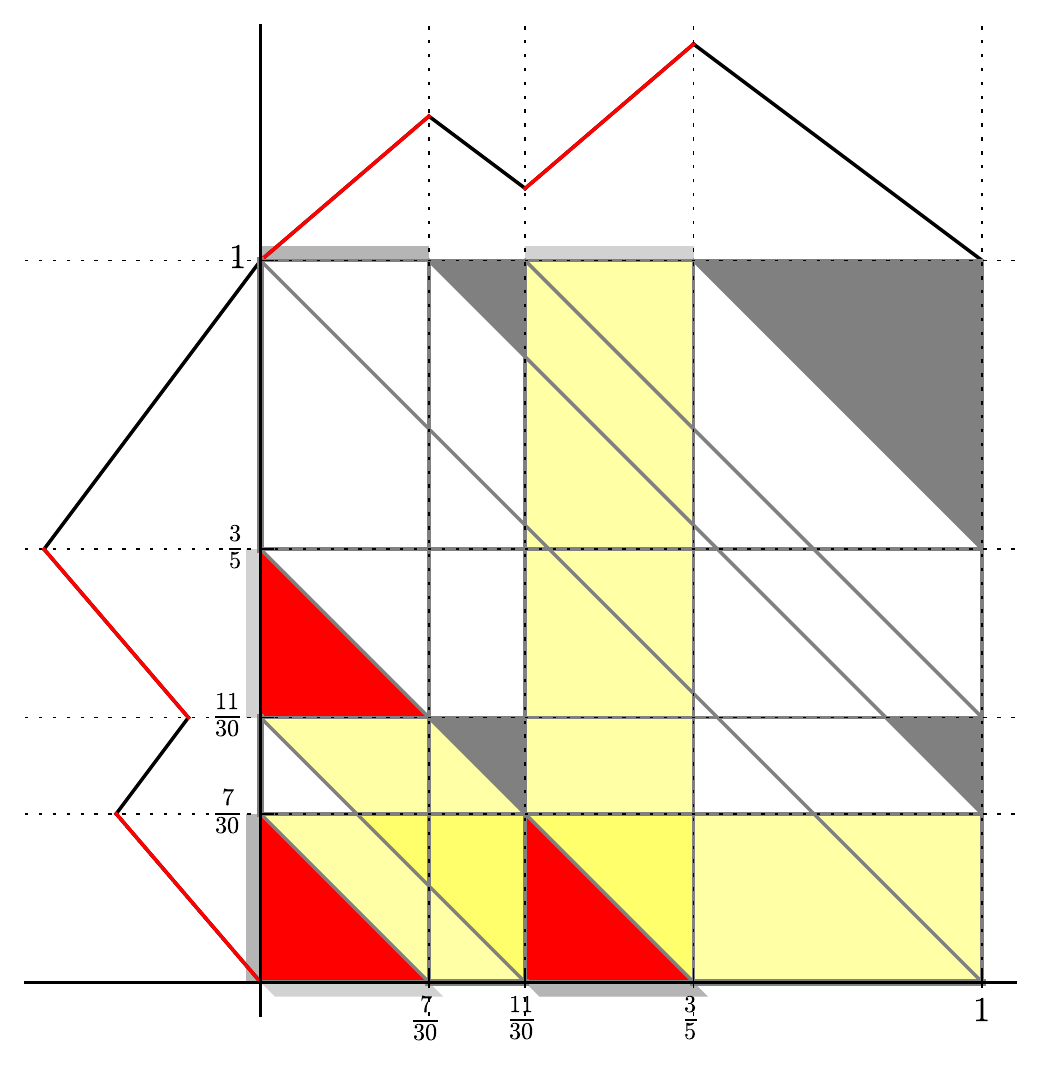}
\includegraphics[width=.32\linewidth]{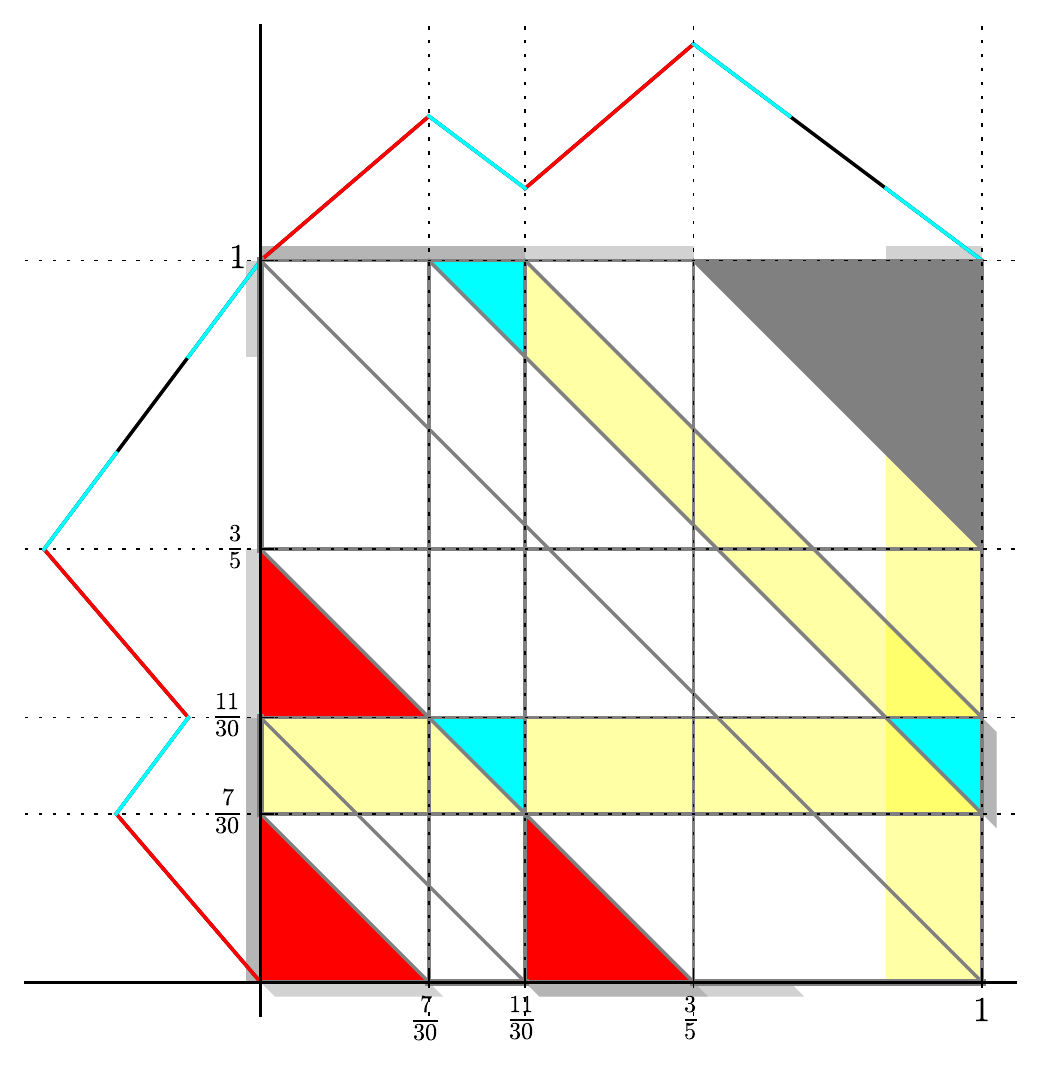}
\includegraphics[width=.32\linewidth]{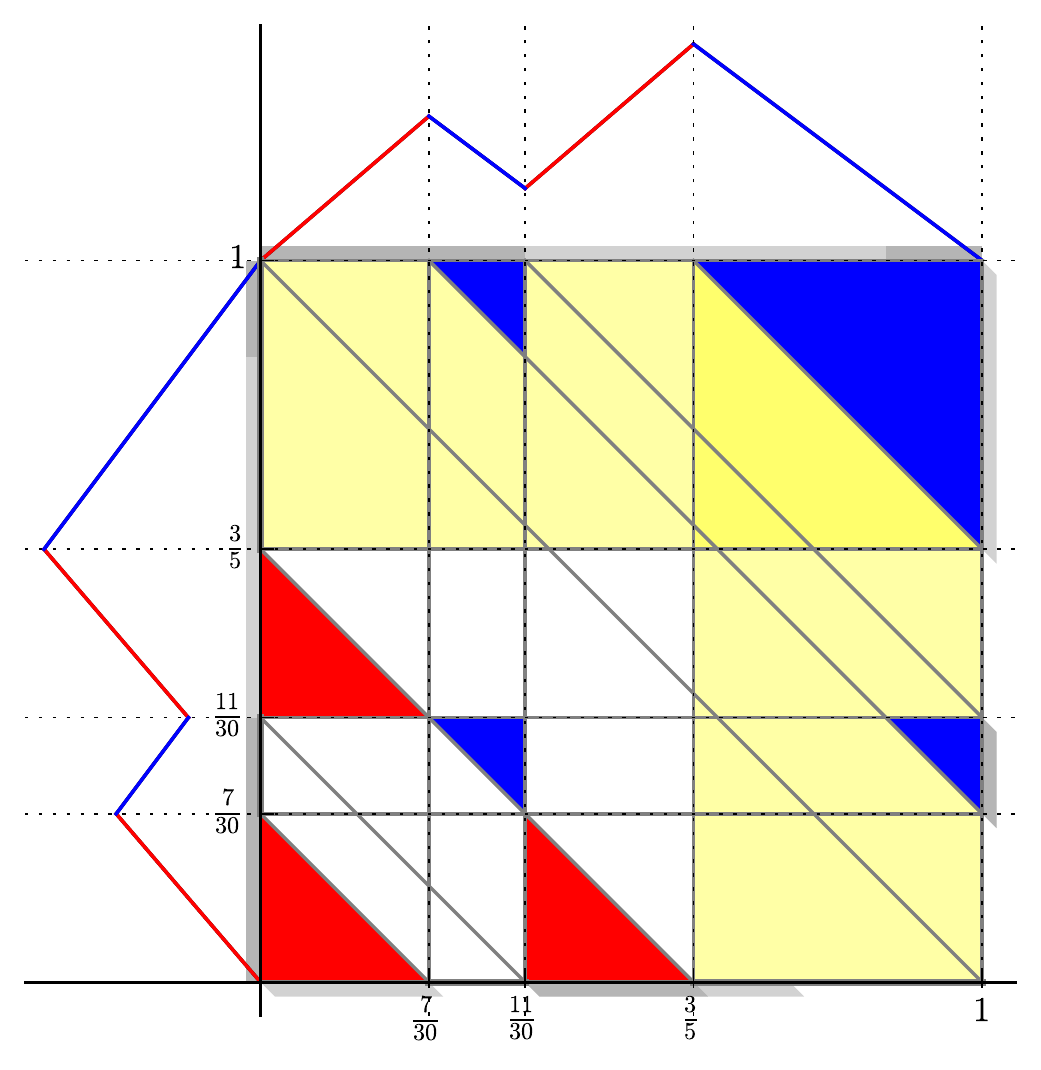}
\caption{Compute the (directly) covered intervals for \sage{$\pi =$ gj\_2\_slope(3/5,1/3)}.}
\label{fig:compute_covered_intervals_cont}
\end{figure}

\begin{example}
\autoref{fig:compute_covered_intervals_cont} shows the computation for \sage{$\pi=$ gj\_2\_slope(3/5,1/3)} step by step, where colors indicate membership to connected covered components. 
In \autoref{fig:compute_covered_intervals_cont}--(1), the additive face $F_1$ with $p_1(F_1)=[\frac{11}{30}, \frac35]$, $p_2(F_1) = [0, \frac{7}{30}]$ and $p_3(F_1) =[\frac{11}{30}, \frac35]$ is considered. Its three projections (indicated by yellow strips on $\Delta\P$) are directly covered (indicated by red color on the function graphs), and they form the connected covered component $\mathcal{C}_1 = [0, \frac{7}{30}] \cup  [\frac{11}{30}, \frac35]$. In  \autoref{fig:compute_covered_intervals_cont}--(2), the connected covered component $\mathcal{C}_2 = [\frac{7}{30}, \frac{11}{30}] \cup  [\frac{13}{15}, 1]$ (with light blue color) is formed by considering the additive face $F_2 = F([\frac{13}{15}, 1], [\frac{7}{30}, \frac{11}{30}] , [\frac{13}{15}, 1])$.  In  \autoref{fig:compute_covered_intervals_cont}--(3), the additive face $F_3$ with $p_1(F_3)=p_2(F_3)=p_3(F_3) \bmod 1=[\frac35, 1]$ is considered, and the connected covered component $\mathcal{C}_3 = [\frac35, 1]$ is formed. Since $\mathcal{C}_3$ and $\mathcal{C}_2$ overlap, they are merged into one large connected component $\mathcal{C}_{\mathrm{blue}}=\mathcal{C}_2 \cup \mathcal{C}_3$. Therefore, all intervals are covered, and the connected components are $\mathcal{C}_\mathrm{red} = [0, \frac{7}{30}] \cup  [\frac{11}{30}, \frac35]$ and  $\mathcal{C}_{\mathrm{blue}}=[\frac{7}{30}, \frac{11}{30}] \cup [\frac35, 1]$.
\end{example}

In phase two, the program computes indirectly covered intervals and merges connected covered components according to \autoref{thm:indirectly_covered}, using one-dimensional additive faces of $\Delta\P$, until no further change is possible.  We explain this process by the example below.

\begin{figure}[h]
\centering
\includegraphics[width=.32\linewidth]{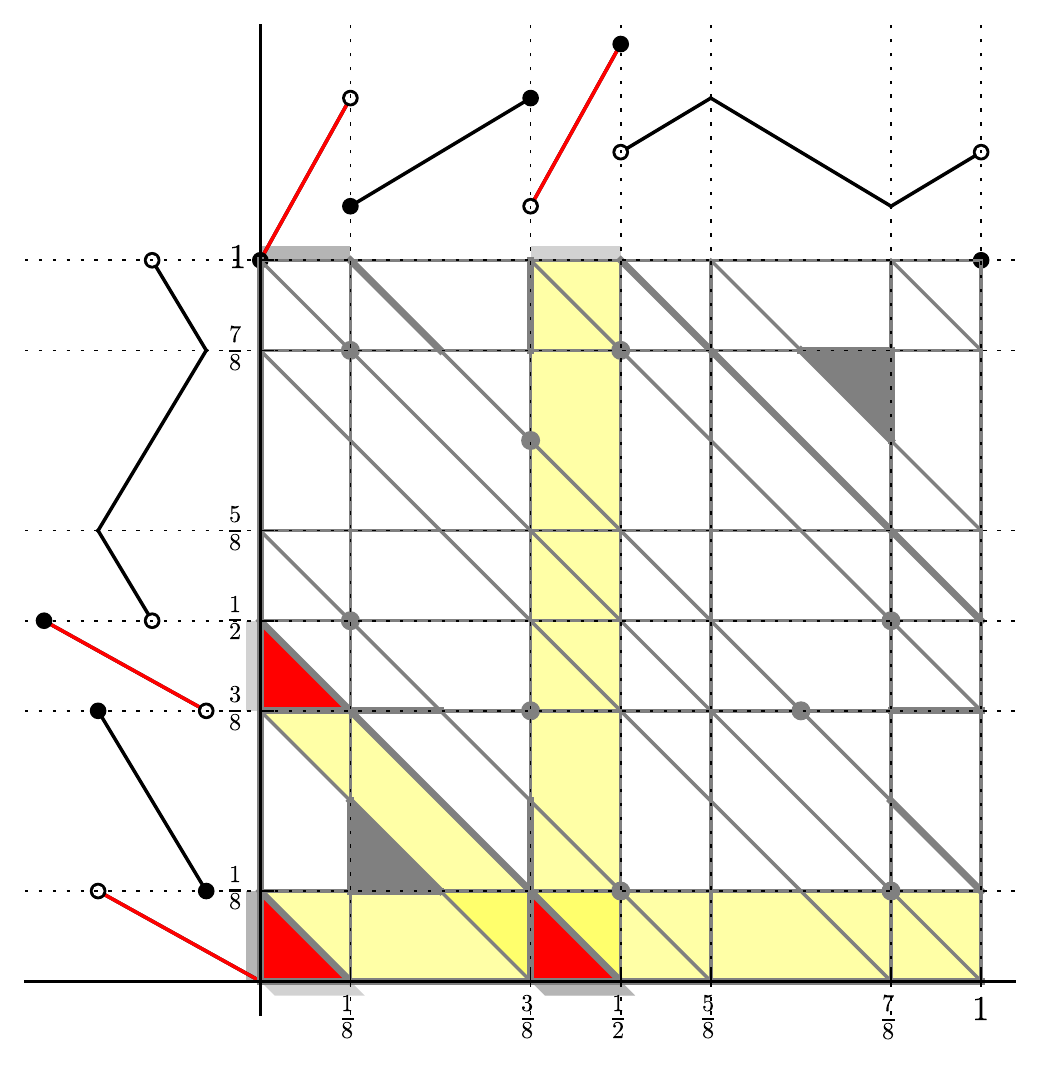}
\includegraphics[width=.32\linewidth]{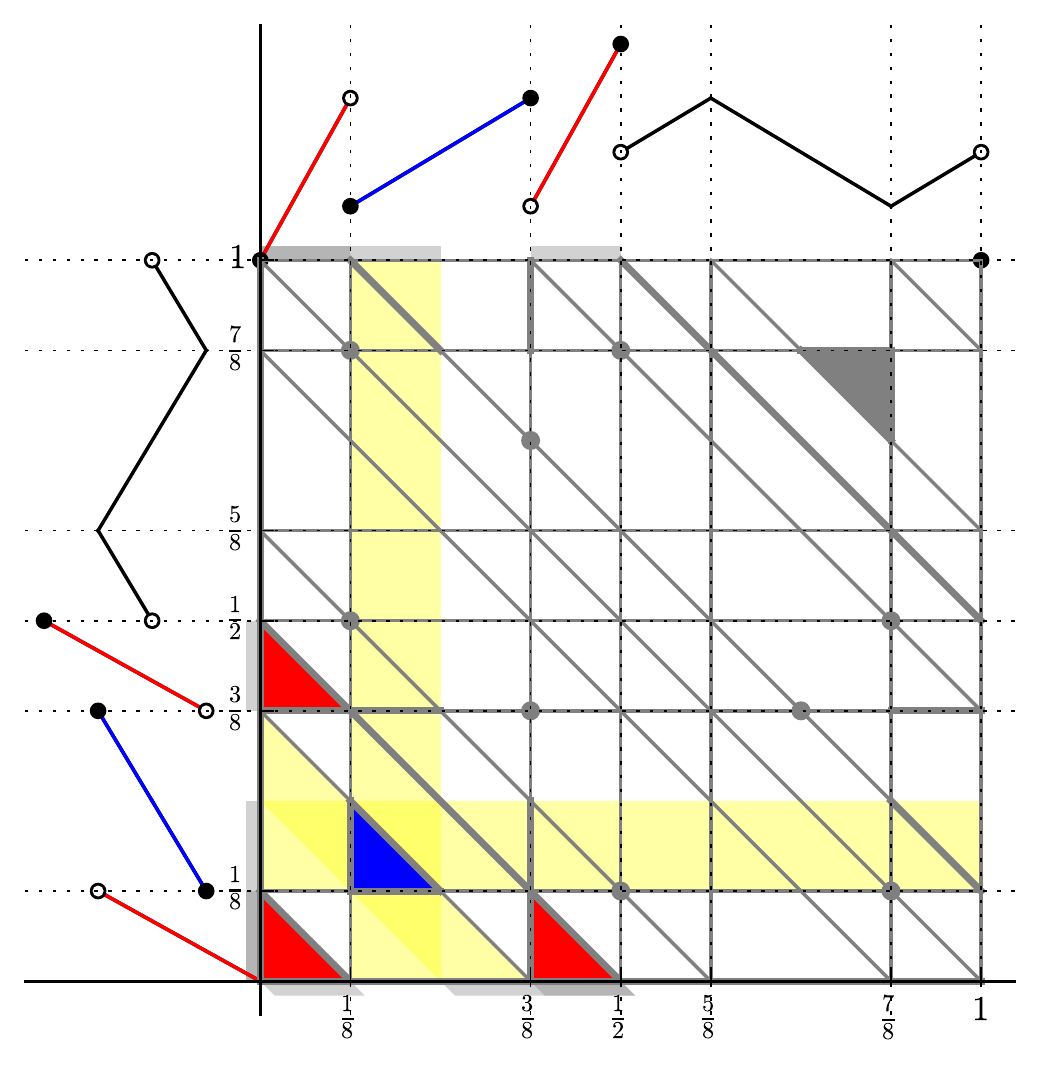}
\includegraphics[width=.32\linewidth]{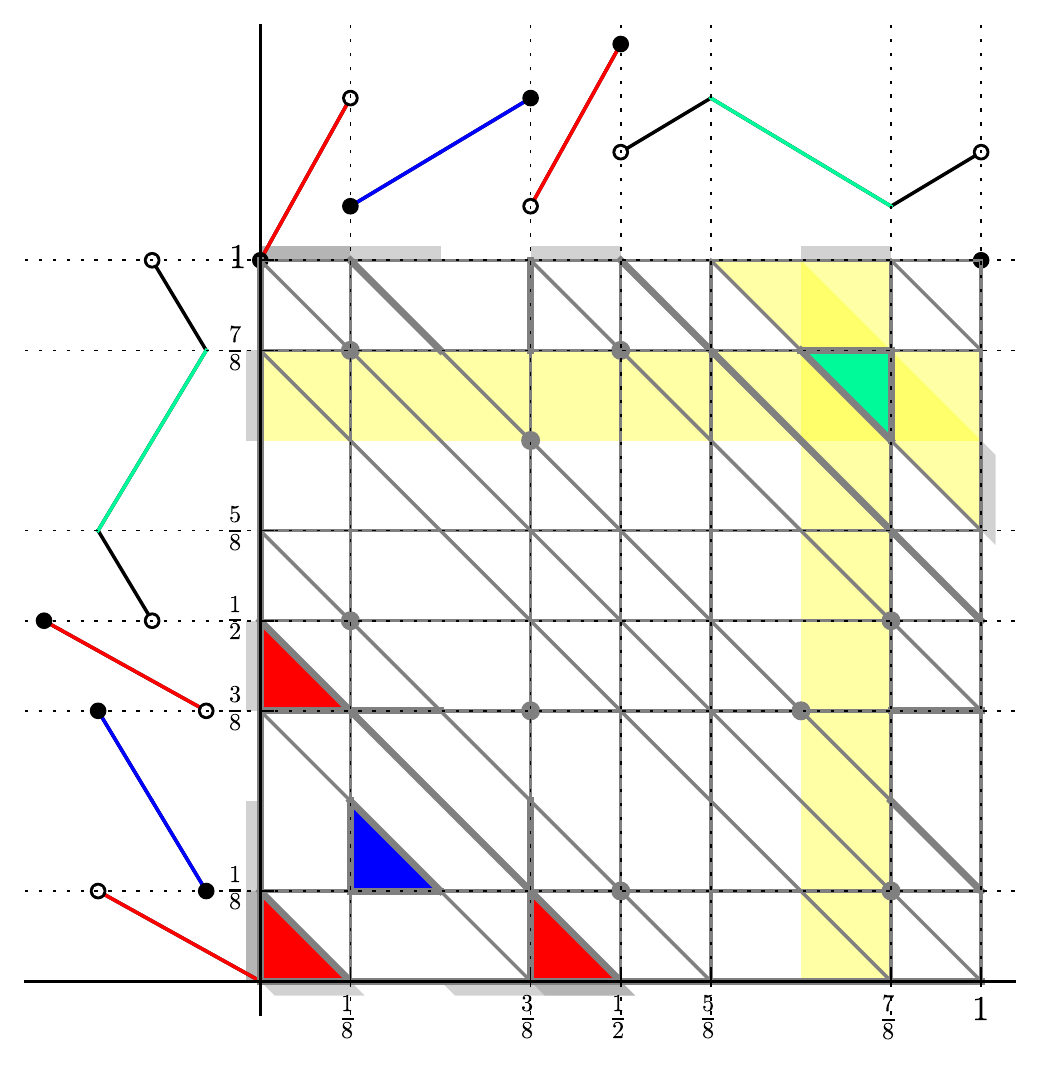}
\\
\includegraphics[width=.32\linewidth]{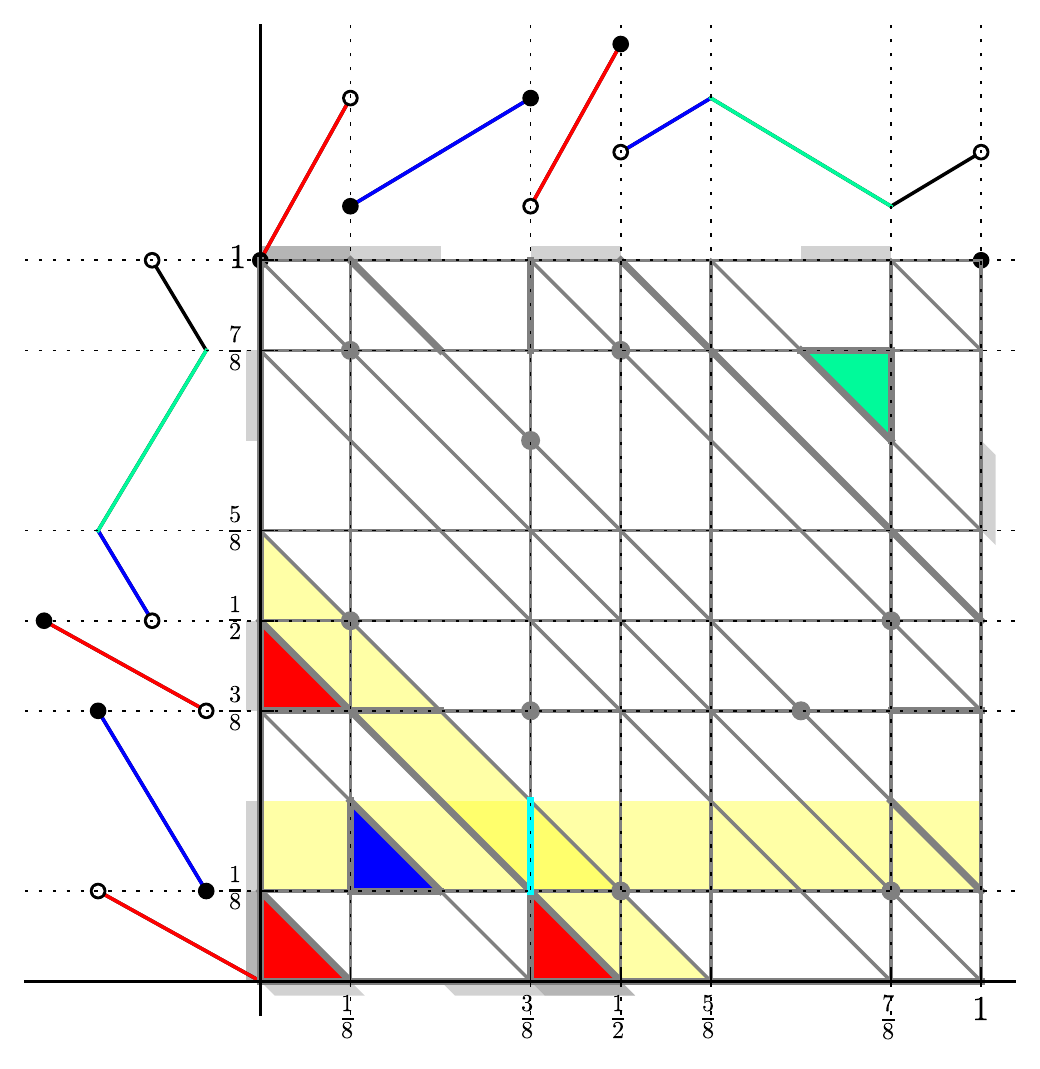}
\includegraphics[width=.32\linewidth]{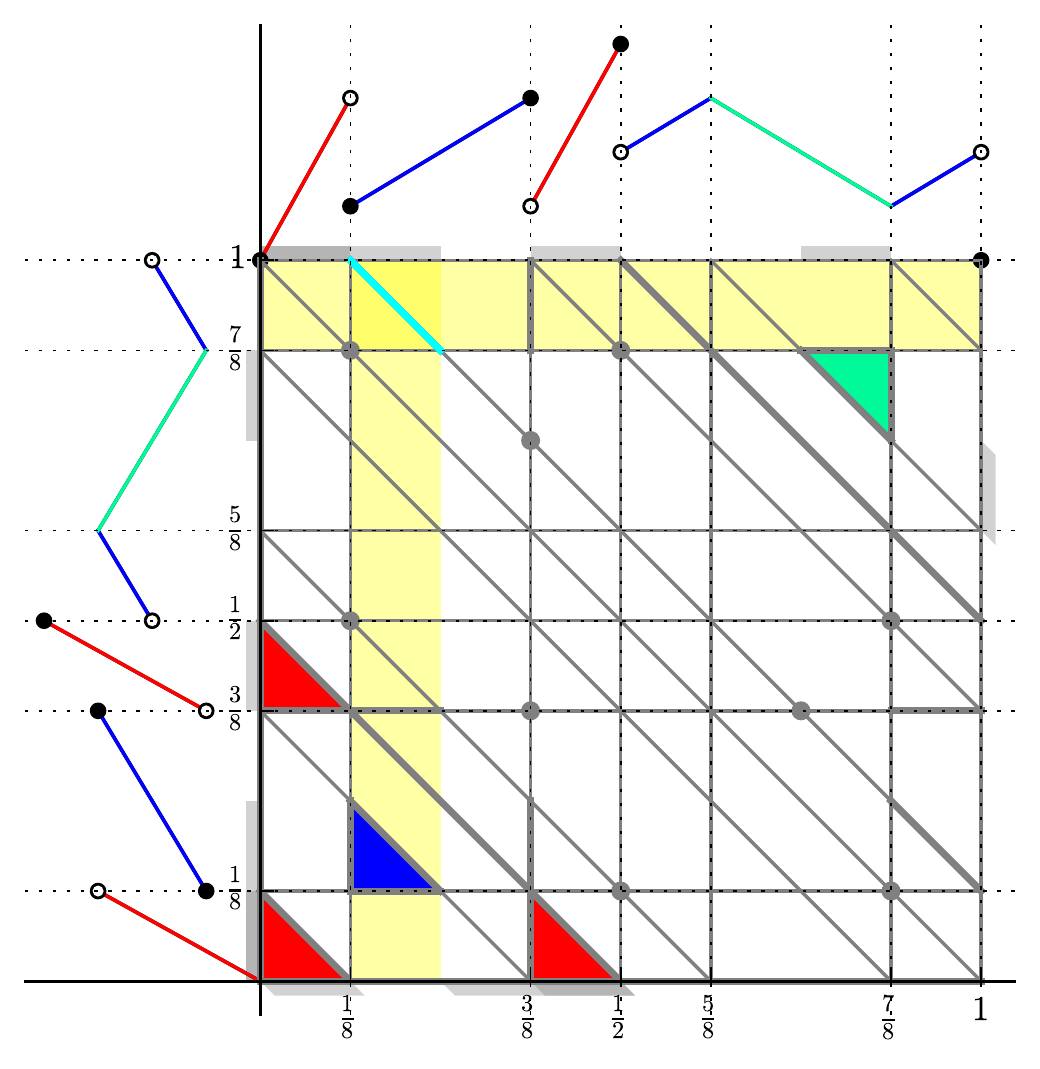}
\includegraphics[width=.32\linewidth]{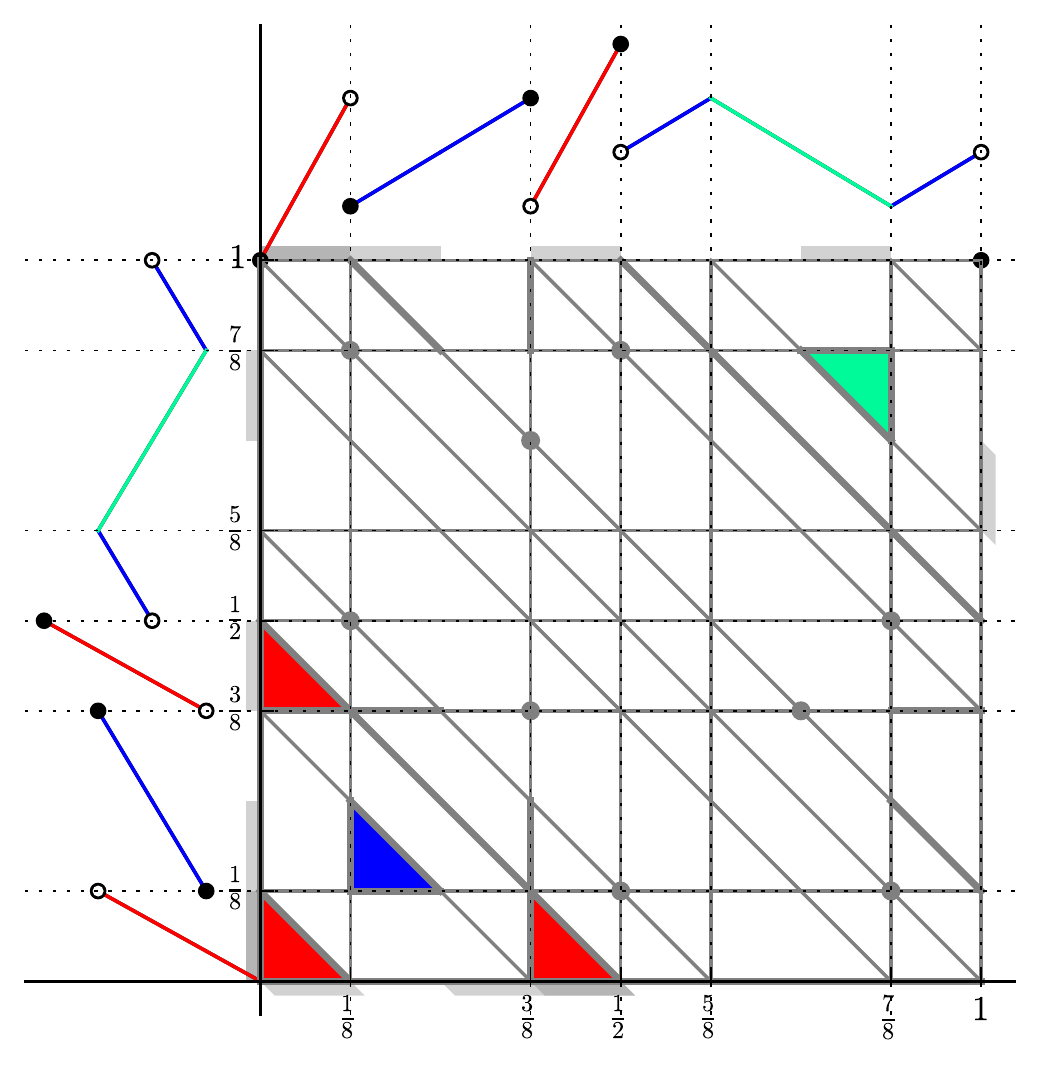}
\caption{Compute the (directly and indirectly) covered intervals for \sage{$\pi=$  hildebrand\_discont\_3\_slope\_1()}}
\label{fig:compute_covered_intervals_disc}
\end{figure} 

\begin{example}
\label{ex:connected_components_disc}
\autoref{fig:compute_covered_intervals_disc} illustrates the computation of connected covered components for the discontinuous function \sage{$\pi$ = hildebrand\_discont\_3\_slope\_1()}. 
In \autoref{fig:compute_covered_intervals_disc}--(1) to (3), the directly covered intervals are computed. They form the connected components $\mathcal{C}_{\mathrm{red}} = (0, \tfrac18) \cup (\tfrac38, \tfrac12), \mathcal{C}_{\mathrm{blue}} = (\tfrac18, \tfrac14) \cup (\tfrac14, \tfrac38)$ and $\mathcal{C}_{\mathrm{green}} = (\tfrac58, \tfrac34) \cup (\tfrac34, \tfrac78)$.
In \autoref{fig:compute_covered_intervals_disc}--(4), the (light blue) vertical additive edge $F(\{\frac38\}, [\frac18, \frac14], [\frac12, \frac58])$ is considered. Its projections $\intr(p_2(F))=(\frac18, \frac14) $ and $\intr(p_3(F))=(\frac12, \frac58)$ are connected through a translation. Since the interval $(\frac18, \frac14)$ was covered in the second sub-figure (as indicated by blue color), the other interval $(\frac12, \frac58)$ which was previously uncovered (as indicated by black color in \autoref{fig:compute_covered_intervals_disc}--(3)), becomes (indirectly) covered (as indicated by blue color in \autoref{fig:compute_covered_intervals_disc}--(4)).
Therefore, the connected covered component $\mathcal{C}_{\mathrm{blue}}$ grows to $(\tfrac18, \tfrac14) \cup (\tfrac14, \tfrac38) \cup (\tfrac12, \tfrac58)$. In \autoref{fig:compute_covered_intervals_disc}--(5), the (light blue) diagonal additive edge $F([\frac18, \frac14], [\frac78, 1], \{\frac98\})$ is considered, and the interval $(\tfrac78, 1)$ is indirectly covered (as indicated by blue color). This concludes the computation of the connected covered components of $\pi$. They are $\mathcal{C}_{\mathrm{red}} =(0, \tfrac18) \cup (\tfrac38, \tfrac12),  \mathcal{C}_{\mathrm{blue}} = (\tfrac18, \tfrac14) \cup (\tfrac14, \tfrac38) \cup (\tfrac12, \tfrac58)  \cup (\tfrac78, 1)$ and $\mathcal{C}_{\mathrm{green}} = (\tfrac58, \tfrac34)\cup (\tfrac34, \tfrac78)$.
\end{example}

The computation of connected components of covered intervals terminates in a finite number of steps for a piecewise linear function $\pi$ with rational breakpoints. For functions with irrational breakpoints, the finiteness of the procedure remains an open question.

\section{Extremality test}
\label{s:extremality}

The command \sage{extremality\_test($\pi$)} implements a grid-free generalization
  of the automatic extremality test from
  \cite{basu-hildebrand-koeppe:equivariant}, which is suitable also for
  piecewise linear functions with rational breakpoints that have huge
  denominators.  Its support for functions with algebraic irrational
  breakpoints such as \sagefunc{bhk_irrational}
  \cite[section~5]{basu-hildebrand-koeppe:equivariant}
  will be described in the papers
  \cite{koeppe-zhou:crazy-perturbation,koeppe-zhou:algo-paper}.  
  
Given a $\Z$-periodic piecewise linear function $\pi$ with rational
breakpoints, \sage{extremality\_test($\pi$)} first checks whether the
conditions for minimality are satisfied, by calling
\sage{minimality\_test($\pi$)}. If $\pi$ is not minimal valid, then
\sage{extremality\_test($\pi$)} returns \sage{False}.  
Otherwise it proceeds, using the following theorem.

\begin{theorem}[{rephrased from \cite[Lemma 4.8]{basu-hildebrand-koeppe:equivariant}}] 
\label{thm:extreme-all-covered}
 Let $\pi$ be a piecewise linear minimal valid function with rational breakpoints. If $\pi$ is an extreme function, then the union of closures of the covered intervals of $\pi$ is equal to the whole interval $[0,1]$.
\end{theorem}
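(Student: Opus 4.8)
The plan is to prove the contrapositive: assuming that the union of the closures of the covered intervals of $\pi$ is a proper subset of $[0,1]$, I will construct a nonzero effective perturbation $\tilde\pi\in\tilde\Pi^\pi(\R,\Z)$, which shows $\pi$ is not extreme; this follows the strategy of \cite[Lemma 4.8]{basu-hildebrand-koeppe:equivariant}. First I would pass to the refined grid: let $q$ be the least common multiple of the breakpoint denominators, so $\pi$ is piecewise linear over $\tfrac1q\Z$. Using the $\tfrac1q\Z$-periodicity lemma of \autoref{s:minimality}, after reducing the period if necessary we may assume $\pi(x)=0$ only for $x\in\Z$, i.e.\ $\pi>0$ on $(0,1)$. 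Since the projections of the faces of $\Delta\P$ over $\tfrac1q\Z$ are either empty, single grid points, or full grid intervals, \autoref{thm:directly_covered} and \autoref{thm:indirectly_covered} imply that the union of covered intervals is a union of full grid intervals $\bigl[\tfrac kq,\tfrac{k+1}q\bigr]$; let $U\neq\emptyset$ be the index set of the remaining, \emph{uncovered}, grid intervals.

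Next I would exhibit a candidate. Fix the continuous piecewise linear zigzag $\phi\colon[0,1]\to\R$ with $\phi(0)=\phi(\tfrac12)=\phi(1)=0$, $\phi(\tfrac14)=1$, $\phi(\tfrac34)=-1$, interpolated linearly; the point is that $\phi$ is \emph{antisymmetric about $\tfrac12$}, i.e.\ $\phi(1-t)=-\phi(t)$, and vanishes at $0$ and $1$. Define $\tilde\pi$ to be $\Z$-periodic, identically $0$ on the closure of every covered grid interval (hence at every point of $\tfrac1q\Z$), and on each uncovered interval set $\tilde\pi\bigl(\tfrac kq+\tfrac tq\bigr)=s_k\,\phi(t)$ for $t\in[0,1]$ and $k\in U$, where the scalars $(s_k)_{k\in U}$ are to be chosen. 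Then $\tilde\pi$ is continuous (even when $\pi$ is not) and piecewise linear over $\tfrac1{4q}\Z$, and $\tilde\pi(0)=\tilde\pi(f)=0$ automatically because $0,f\in\tfrac1q\Z$.

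The core step is to show that for a suitable nonzero choice of $(s_k)$ the functions $\pi\pm\epsilon\tilde\pi$ are minimal valid for all sufficiently small $\epsilon>0$. Subadditivity: one checks $\Delta(\pi\pm\epsilon\tilde\pi)_G(v)\ge 0$ at the finitely many vertices $v$ of faces $G$ of $\Delta\P$; where $\Delta\pi_G(v)>0$ this holds for small $\epsilon$, and where $\Delta\pi_G(v)=0$ \autoref{lemma:tight-implies-tight} gives $\Delta\tilde\pi_G(v)=0$. So it suffices that $\Delta\tilde\pi_F\equiv 0$ on every additive face $F$, which I would verify by dimension: a two-dimensional additive face has all three projections among the covered grid intervals, on which $\tilde\pi\equiv 0$, so $\Delta\tilde\pi_F\equiv 0$ trivially; a one-dimensional additive edge has its two proper projections either both covered (again trivial) or, by \autoref{thm:indirectly_covered}, both uncovered, in which case $\Delta\tilde\pi_F\equiv 0$ unwinds — using $\phi(1-t)=-\phi(t)$, which makes the translation or point reflection carrying one projection to the other send $s_k\phi$ to $s_k\phi$ — to the single relation $s_k=s_{k'}$ linking those two uncovered intervals; zero-dimensional faces and the endpoints of intervals contribute nothing since $\phi$ vanishes at grid points. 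The symmetry condition $\tilde\pi(x)+\tilde\pi(f-x)=0$ is of the same nature: the diagonal $x+y=f$ consists of diagonal additive edges (and $x\mapsto f-x$ preserves coveredness), so it reduces to $s_k=s_{k'}$ for intervals swapped by $x\mapsto f-x$. Nonnegativity: on covered intervals $\tilde\pi=0$, while on an uncovered $I$ we have $\pi>0$ on $\intr(I)$ and both $\pi$ and $\tilde\pi$ vanish (to first order, or $\pi$ jumps to a positive limit) at the endpoints, so $\pi\pm\epsilon\tilde\pi\ge 0$ for $\epsilon$ small. Thus all the required conditions amount to demanding that $k\mapsto s_k$ be constant on the connected components of the graph on $U$ whose edges are the above translations and reflections — and this system has the nonzero solution $s_k\equiv 1$. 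Hence $\tilde\pi\ne 0$ lies in $\tilde\Pi^\pi(\R,\Z)$ and $\pi=\tfrac12(\pi+\epsilon\tilde\pi)+\tfrac12(\pi-\epsilon\tilde\pi)$ is a proper convex combination, so $\pi$ is not extreme.

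I expect the main obstacle to be exactly the face-by-face verification of the previous paragraph: one must be certain that no additive face of $\Delta\P$ imposes a condition that links an uncovered interval to a covered one, or that forces $\tilde\pi|_I$ to be anything other than an antisymmetric zigzag, so that the conditions collapse to the innocuous relations $s_k=s_{k'}$. This is precisely where \autoref{thm:directly_covered} and \autoref{thm:indirectly_covered} — the ``affine imposing'' analysis of \cite{basu-hildebrand-koeppe:equivariant} — do the essential work, and it is also where the discontinuous case needs extra care: the relevant additivities involve the limit values $\Delta\pi_F$, and \autoref{thm:indirectly_covered} itself rests on \autoref{corollay:perturbation-lim-exist}, hence on the hypothesis that $\pi$ is one-sided continuous at the origin. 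A secondary technical point is the initial reduction ensuring $\pi$ vanishes only on $\Z$ (and the corresponding reduction of the effective perturbation space), which relies on the $\tfrac1q\Z$-periodicity lemma recalled in \autoref{s:minimality}.
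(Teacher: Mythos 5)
The paper does not actually prove this statement: it is imported verbatim (``rephrased from'') \cite[Lemma~4.8]{basu-hildebrand-koeppe:equivariant}, and the text explicitly defers the construction of the equivariant perturbation to \cite{koeppe-zhou:algo-paper}. So the comparison here is with the cited source rather than with an in-paper proof. Your argument is a correct reconstruction of that source's strategy: pass to the grid $\tfrac1q\Z$, observe that covered intervals are unions of whole grid intervals, and build an antisymmetric zigzag supported on the uncovered grid intervals whose equivariance under the translations/reflections coming from one-dimensional additive edges reduces every tight additivity (and the symmetry condition along $x+y\equiv f$) to the consistent relations $s_k=s_{k'}$. The face-by-face verification you outline is sound; note that you do not even need \autoref{lemma:tight-implies-tight} or \autoref{corollay:perturbation-lim-exist} for this direction, since $\tilde\pi$ is explicitly constructed and continuous, so all limits $\Delta\tilde\pi_{F'}$ can be evaluated directly.

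The one step stated too loosely is the opening reduction ``after reducing the period if necessary we may assume $\pi>0$ on $(0,1)$'': the periodicity lemma of \autoref{s:minimality} tells you $\pi$ is $\tfrac1{q'}\Z$-periodic, but replacing $\pi$ by the compressed function changes $f$ and requires an argument (via the multiplicative homomorphism) that extremality and coveredness are preserved, which you do not supply. Fortunately the reduction is unnecessary: if $\pi(b)=0$ for $b$ in the interior of a grid interval, then the affine, nonnegative function $\pi$ vanishes identically on that open interval; subadditivity and nonnegativity then force $\pi\equiv 0$ on $\bigl(\tfrac{mk}{q},\tfrac{m(k+1)}{q}\bigr)$ for all $m\ge 1$, hence (taking $m\ge q$ and using $\Z$-periodicity) $\pi\equiv 0$ off a discrete set, contradicting $\pi(f)=1$ together with the symmetry condition. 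So $\pi>0$ on the interior of every grid interval automatically, and the comparison $\pi\ge\epsilon\lvert\tilde\pi\rvert$ near the interval endpoints goes through exactly as you describe. With that repair the proof is complete.
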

Assume that the function $\pi$ 
is minimal valid. The command \sage{extremality\_test($\pi$)} first checks whether the whole
interval $[0,1]$ is covered by the union of closures of the connected covered
components returned by \sage{generate\_covered\_intervals($\pi$)}. If there is
an uncovered interval, then, assuming that $\pi$ be a piecewise linear
function with rational breakpoints,  the extremality test returns \sage{False},
which is justified by \autoref{thm:extreme-all-covered}.  
Our code also constructs an effective perturbation~$\tilde\pi$ in this case.
However, we defer an explanation of the construction of this ``equivariant'' \cite{basu-hildebrand-koeppe:equivariant}
perturbation, as well the discussion
of the case of functions with irrational breakpoints, to the paper~\cite{koeppe-zhou:algo-paper}.

In the following, we assume that there is no uncovered interval. Let $\mathcal{C}_1, \mathcal{C}_2, \dots, \mathcal{C}_k$ be the connected covered components of $\pi$. Let $\tilde\pi \in \tilde\Pi^{\pi}(\R,\Z)$ be an effective perturbation function. Then $\tilde\pi$ is affine linear with the same slope, say $s_i$, on the $i$-th connected covered component $\mathcal{C}_i$ for $i = 1, 2, \dots, k$.
We also assume that $\pi$ is at least one-sided continuous at the origin. Then
by \autoref{lemma:perturbation-lipschitz-continuous}, the perturbation
$\tilde\pi$ can only be discontinuous at the points where $\pi$ is
discontinuous. Let the variables $d_j$ ($j=1, 2, \dots, m)$ denote the changes
of the value of $\tilde\pi$ at the $m$ discontinuity points of $\pi$. 
In other words, the variables $d_j$ denote jumps $\tilde\pi(x) - \tilde\pi(x^-)$ when $\pi$ is discontinuous at $x$
on the left, or $\tilde\pi(x^+)-\tilde\pi(x)$ when $\pi$ is discontinuous at
$x$ on the right, where $\tilde\pi(x^-)$ and $\tilde\pi(x^+)$ are well defined by \autoref{corollay:perturbation-lim-exist}.
Since $\bigcup_{i=1}^k \cl(\mathcal{C}_i) = [0,1]$, we have that $\tilde\pi$
is a piecewise linear function with breakpoints in $\{x_0, x_1, \dots, x_n\}$,
which can be considered as a symbolic function over the slope parameters
$s_i$ ($i=1, 2, \dots, k$) and jump parameters $d_j$ ($j =1,
2, \dots, m$). In fact for any $x\in [0,1]$, the value $\tilde{\pi}(x)$ is
linearly determined by the variables $s_i$ and $d_j$. We can define a
vector-valued piecewise linear function $g \colon [0,1] \to \R^{k+m}$, 
so that $\tilde\pi(x) = g(x)\cdot (s_1, s_2, \dots, s_k, d_1, d_2,\dots,
d_m)$. This function $g$ can be set up using the command
$\sage{generate\_symbolic}$ in our code. 

The perturbation $\tilde\pi$ satisfies that $\tilde\pi(0)=\tilde\pi(f)=\tilde\pi(1)=0$, and 
all the additivity relations (including additivity-in-the-limit) that $\pi$ has, by \autoref{lemma:tight-implies-tight}. Therefore, we have a system of linear equations for the variables $(s_1, s_2, \dots, s_k, d_1, d_2,\dots, d_m)$, which admits a trivial solution $(0,0,\dots, 0)$. By \cite[Theorem 4.11]{basu-hildebrand-koeppe:equivariant}, we know that $\pi$ is an extreme function if and only if $(0,0,\dots, 0)$ is the unique solution to this system. The command \sage{extremality\_test($\pi$)} solves the above finite-dimensional linear system using linear algebra. It returns \sage{True} if the solution space has dimension $0$ or \sage{False} otherwise.

\begin{example} 
Let $\pi=\sage{hildebrand\underscore discont\underscore 3\underscore slope\underscore 1()}$ with $f = \frac12$. As computed in the \autoref{ex:connected_components_disc}, $\pi$ has three connected covered components $\mathcal{C}_{1} =(0, \tfrac18) \cup (\tfrac38, \tfrac12), \mathcal{C}_{2} = (\tfrac18, \tfrac14) \cup (\tfrac14, \tfrac38) \cup (\tfrac12, \tfrac58)  \cup (\tfrac78, 1)$ and $\mathcal{C}_{3} = (\tfrac58, \tfrac34)\cup (\tfrac34, \tfrac78)$.  An effective perturbation $\tilde\pi \in \tilde\Pi^{\pi}(\R,\Z)$ is affine linear on $\mathcal{C}_i$ ($i=1,2,3$). Let $s_i$ denote the slope value of $\tilde\pi$ on $\mathcal{C}_{i}$, for $i=1,2,3$. The function $\pi$ is discontinuous at the points $\frac18^-, \frac38^+, \frac12^+$ and $1^-$. Let $d_1 = \tilde\pi(\frac18)-\tilde\pi(\frac18^-)$, $d_2= \tilde\pi(\frac12^+)-\tilde\pi(\frac12)$, $d_3 = \tilde\pi(\frac38^+)-\tilde\pi(\frac38)$ and $d_4 = \tilde\pi(1)-\tilde\pi(1^-)$. We know that $\tilde\pi(0)=0$, $\tilde\pi(f)=0$ and $\tilde\pi(1)=0$. Define the piecewise linear function $g\colon [0,1]\to \R^7$, such that
\[
 g(x) = 
 \begin{cases} 
   (1,0,0,0,0,0,0)x & \text{if } 0 \leq x < \frac18 \\
   (0,1,0,0,0,0,0)x + (\frac18,-\frac18,0,1,0,0,0) & \text{if } \frac18 \leq x \leq \frac38 \\
   (1,0,0,0,0,0,0)x - (\frac14,-\frac14,0,-1,-1,0,0) & \text{if } \frac38 < x \leq \frac12 \\
   (0,1,0,0,0,0,0)x + (\frac14,-\frac14,0,1,1,1,0) & \text{if } \frac12 < x \leq \frac58 \\
   (0,0,1,0,0,0,0)x + (\frac14,\frac38,-\frac58,1,1,1,0) & \text{if } \frac58 < x \leq \frac78 \\
   (0,1,0,0,0,0,0)x + (\frac14,-\frac12,\frac14,1,1,1,0) & \text{if } \frac78 < x <1 \\
   (\frac14,\frac12,\frac14,1,1,1,1) & \text{if } x = 1.
  \end{cases}
\]
This function $g$ can be obtained by calling

\begin{scriptsize}
  \begin{tabular}{@{}p{\linewidth}@{}}
	\begin{verbatim}
    h = hildebrand_discont_3_slope_1();
    components = generate_covered_intervals(h);
    g = generate_symbolic(h, components, field=QQ).
	\end{verbatim}
  \end{tabular}
\end{scriptsize}
Then $\pi(x) = g(x) \cdot (s_1, s_2, s_3, d_1, d_2, d_3, d_4)$, for $0 \leq x \leq 1$.
It follows from the symmetry condition that $d_1 = d_3$ and $d_2 = d_4$.
By considering the additive limiting cones on the diagram of $\Delta\P$ (see \autoref{fig:2d_diagrams_discontinuous_function}--left) and by \autoref{lemma:tight-implies-tight}, we obtain that $\tilde\pi(\frac18^+)+\tilde\pi(\frac12^+)=\tilde\pi(\frac58^+)$, $\tilde\pi(\frac78^-)+\tilde\pi(\frac78^-)=\tilde\pi(\frac74^-)$ and $\tilde\pi(0^-)+\tilde\pi(\frac18^+)=\tilde\pi(\frac18^-)$. These additivity relations, along with $\tilde\pi(f)=0$ and $\tilde\pi(1)=0$, imply the linear system
\[
\begin{pmatrix}
\tfrac14 & \tfrac14 & 0 & 1 & 1 & 0 & 0\\
\tfrac14 & \tfrac12 & \tfrac14 & 1 & 1 & 1 & 1 \\
0 & 0 & 0 & 1 & 0 & -1 & 0 \\
0 & 0 & 0 & 0 & 1 & 0 & -1 \\
\tfrac18 & -\tfrac18 & 0 & 1 & 0 & 0 & 0 \\
\tfrac14 & \tfrac38 & \tfrac38 & 1 & 1 & 1 & 0\\
\tfrac14 & \tfrac12 & \tfrac14 & 2 & 1 & 1 & 0
\end{pmatrix}
\begin{pmatrix}
s_1 \\ s_2 \\ s_3 \\ d_1 \\ d_2 \\ d_3 \\ d_4
\end{pmatrix}
= 0.
\]
The above system has the unique solution $s_1 = s_2 = s_3 = d_1 = d_2= d_3=d_4=0$.
Therefore, $\pi$ is an extreme function.
\end{example}

\begin{table}
  \caption{A sample Sage session on the extremality test}
  \label{tab:extremality_test}
  \begin{tikzpicture}[overlay]
     \node at (12.5,-4) {\includegraphics[scale=0.5]{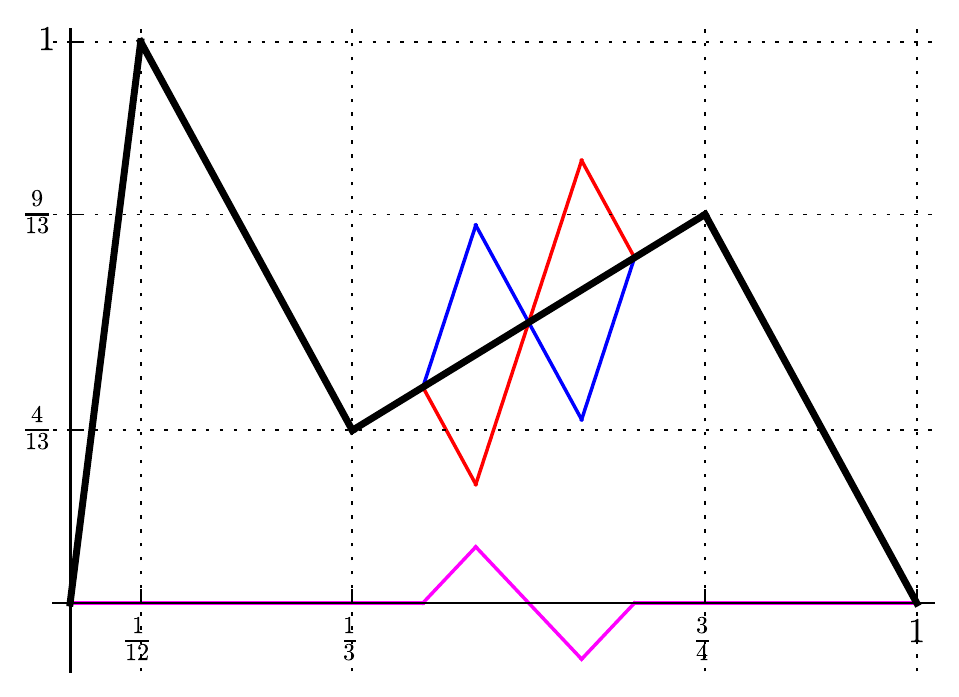}};
  \end{tikzpicture}
  \tiny 
  \begin{tabular}{@{}p{\linewidth}@{}}
    \toprule
	\begin{verbatim}
	## First we load a function and store it in variable h.
	## We start with the easiest function, the GMIC.
	sage: h = gmic()
	INFO: 2016-08-08 16:51:31,048 Rational case.
	
	## Test its extremality; this will create informative output and plots
	sage: extremality_test(h, show_plots=True)
	INFO: 2016-08-08 16:54:22,014 pi(0) = 0
	INFO: 2016-08-08 16:54:22,016 pi is subadditive.
	INFO: 2016-08-08 16:54:22,016 pi is symmetric.
	INFO: 2016-08-08 16:54:22,018 Thus pi is minimal.
	INFO: 2016-08-08 16:54:22,018 Plotting 2d diagram...
	INFO: 2016-08-08 16:54:22,018 Computing maximal additive faces...
	INFO: 2016-08-08 16:54:22,022 Computing maximal additive faces... done
	Launched png viewer for Graphics object consisting of 25 graphics primitives
	INFO: 2016-08-08 16:54:22,526 Plotting 2d diagram... done
	INFO: 2016-08-08 16:54:22,526 Computing covered intervals...
	INFO: 2016-08-08 16:54:22,527 Computing covered intervals... done
	INFO: 2016-08-08 16:54:22,527 Plotting covered intervals...
	Launched png viewer for Graphics object consisting of 2 graphics primitives
	INFO: 2016-08-08 16:54:22,985 Plotting covered intervals... done
	INFO: 2016-08-08 16:54:22,986 All intervals are covered (or connected-to-covered). 2 components.
	INFO: 2016-08-08 16:54:23,035 Finite dimensional test: Solution space has dimension 0
	INFO: 2016-08-08 16:54:23,035 Thus the function is extreme.
	
	## The docstring tells us that we can set the `f' value using an optional argument.
	sage: h = gmic(1/5)
	INFO: 2016-08-08 16:55:59,440 Rational case.
	## Of course, we know it will still be extreme; but let's test it to
	## see all the informative graphs.
	sage: extremality_test(h, show_plots=True)
	[...]
	True

	## Let's try a different function from the compendium. 
	## We change the parameters a little bit, so that they do NOT satisfy the known
	## sufficient conditions from the literature about this class of functions.
	sage: h = drlm_backward_3_slope(f=1/12, bkpt=4/12)
	INFO: 2016-08-08 17:03:20,438 Conditions for extremality are NOT satisfied.
	INFO: 2016-08-08 17:03:20,439 Rational case.
	
	## Let's run the extremality test.
	sage: extremality_test(h, show_plots=True)
	[...]
	INFO: 2016-08-08 17:03:44,796 Thus pi is minimal.
	[...]
	INFO: 2016-08-08 17:03:46,417 Uncovered intervals: ([[5/12, 2/3]],)
	[...]
	INFO: 2016-08-08 17:03:49,544 Plotting perturbation... done
	INFO: 2016-08-08 17:03:49,545 Thus the function is NOT extreme.
	False
	## Indeed, it's not extreme.  
	## We see a perturbation in magenta and the two perturbed functions in blue and red,
	## whose average is the original function (black).
	
	## Here's the Gomory fractional cut.
	sage: h = gomory_fractional()
	## It is not even minimal:
	sage: minimality_test(h, True)
	INFO: 2016-08-08 17:06:33,647 pi(0) = 0
	INFO: 2016-08-08 17:06:33,648 pi is not minimal because it does not stay in the range of [0, 1].
	False
	
	## There's many more functions to explore. Use the Tab key on the next line to see a collection of those functions.
	sage: extreme_functions.[tab]
	\end{verbatim}
    \\
	\bottomrule
  \end{tabular}
\end{table}
%
%
\section{Transformations of piecewise linear functions}
\label{sec:procedrue}
The Python module named \sage{procedures} gives access to the ``procedures'' that can transform   extreme functions.  
These procedures include the transformations between functions for finite and infinite group problems that we will discuss in \autoref{sec:finite-group}, and other well-known transformations; see \cite[Table 5]{igp_survey}. For instance, we have:
\begin{description}
\item[\sage{multiplicative\_homomorphism}] constructs the function $x \mapsto
  \pi(\lambda x)$ for a given piecewise linear function $\pi$, where $\lambda$
  is a nonzero integer; see \cite[sections 19.4.1,
  19.5.2.1]{Richard-Dey-2010:50-year-survey}.
  (The use of the word ``multiplicative homomorphism'' for this operation 
  comes from \cite{Richard-Dey-2010:50-year-survey}.)
\item[\sage{automorphism}] is a special case of the above, where $\lambda$ is
  $\pm1$, so that $x\mapsto \lambda x$ is an automorphism of the group $\R/\Z$.  
  If a different factor~$\lambda$ is provided, an error is signalled.
  Since $\lambda = -1$ gives the only nontrivial automorphism, $x \mapsto
  \pi(-x)$, this is the default.  See \cite{johnson1974group} for a
  discussion. 
\item[\sage{projected\_sequential\_merge}] performs the one-dimensional projected sequential merge, with operator $\lozenge_n^1 \, (\sage{gmic})$, on an extreme function $\pi$; see \cite{dey2}.
\end{description}
Newly discovered procedures since the publication of \cite{igp_survey} have been implemented as well. 
\begin{description}
\item[\sage{symmetric\_2\_slope\_fill\_in}] returns an extreme 2-slope function that approximates the given continuous minimal valid function $\pi$ with infinity norm distance less than $\epsilon$, under the assumption that $f$ is a rational number; see \cite{bhm:dense-2-slope}.
\item[\sage{symmetric\_2\_slope\_fill\_in\_irrational}] is a variant, proposed
  by the third author, Yuan Zhou (2015, unpublished), of the above procedure
  \sage{symmetric\_2\_slope\_fill\_in}, which removes the assumption $f\in \Q$. 
\end{description}

\section{Functionality for the finite (cyclic) group relaxation model}
\label{sec:finite-group}
Let $q$ be a positive integer and let $f \in \frac1q\Z$.  
A discrete function $\pi|_{\frac1q\Z}\colon \frac1q\Z\to\R_+$ is minimal valid
for Gomory--Johnson's finite (cyclic) group relaxation model if and only if $\pi|_{\frac1q\Z}$ is $\Z$-periodic, subadditive, $\pi|_{\frac1q\Z}(0)=0$, and satisfies the symmetry condition.
A minimal valid function $\pi|_{\frac1q\Z}$ is extreme if it is not a proper convex combination of other minimal valid functions.

In our code, a $\Z$-periodic discrete function $\pi|_{\frac1q\Z}$ can be set
up using the Python function named
\sage{discrete\_function\_from\_points\_and\_values(points, values)}, given
two finite lists \sage{points} and \sage{values} which represent the domain of
$\pi|_{\frac1q\Z}$ restricted to the interval $[0,1]$ and the function values
at these points, respectively.

Another important way to obtain a discrete function $\pi|_{\frac1q\Z}$ is to
restrict a piecewise linear function $\pi$ for the infinite group problem to
the cyclic group of given order $q$, by calling the procedure
\sage{restrict\_to\_finite\_group}. Conversely, the procedure
\sage{interpolate\_to\_infinite\_group} provides the piecewise linear
interpolation $\pi$ of a given discrete function $\pi|_{\frac1q\Z}$. 

The functionalities \sage{plot\_2d\_diagram}, \sage{minimality\_test} and
\sage{extremality\_test} that we described above also work for
$\pi|_{\frac1q\Z}$ in the finite group model.

The transformations (procedures) \sage{multiplicative\_homomorphism} and
\sage{automorphism} described in \autoref{sec:procedrue} are also available
for the finite group case.  The latter is more interesting in the finite group
case, because the cyclic group $\frac1q\Z/\Z$ has more nontrivial
automorphisms. Indeed, studying the
action of the automorphisms was a major focus of Gomory's original paper
\cite{gom}. The given factor~$\lambda$ must be an integer coprime with the
group order~$q$; see \cite[section 19.5.2.1]{Richard-Dey-2010:50-year-survey}.

See \autoref{tab:discrete-funtions} for a sample Sage session on discrete
functions for the finite group problem and the related transformations. 

\begin{table}
  \caption{A sample Sage session on discrete functions for the finite group problem.}
  \label{tab:discrete-funtions}
  \tiny 
  \begin{tabular}{@{}p{\linewidth}@{}}
    \toprule
    \begin{tikzpicture}[overlay]
     \node[align=center] at (12.5,-2) {\scriptsize\sage{plot\_with\_colored\_slopes(h)}\\\includegraphics[scale=0.6]{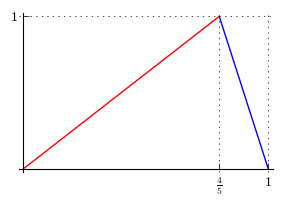}};
     \node[align=center]  at (12.5,-6) {\scriptsize \sage{plot\_with\_colored\_slopes(hr)}\\ \includegraphics[scale=0.6]{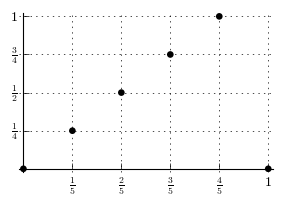}};
     \node[align=center]  at (12.5,-10) {\scriptsize\sage{plot\_with\_colored\_slopes(ha)}\\ \includegraphics[scale=0.6]{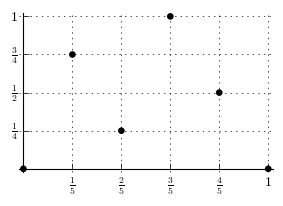}};
     \node[align=center]  at (12.5,-14) {\scriptsize \sage{plot\_with\_colored\_slopes(hi)}\\ \includegraphics[scale=0.6]{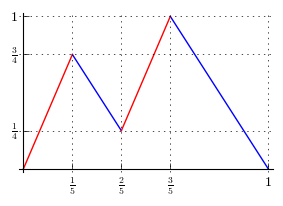}};
     \end{tikzpicture}
	\begin{verbatim}
	## We load the GMIC function with f=4/5, and store it in variable h.
	sage: h = gmic(f=4/5)
	INFO: 2016-08-08 17:23:05,206 Rational case.
	
	## We can restrict to a finite group problem.
	sage: restrict_to_finite_group?
	Signature:      restrict_to_finite_group(function, f=None, oversampling=None, order=None)
	Docstring:
	Restrict the given function to the cyclic group of given order.
	
	   If order is not given, it defaults to the group generated by the
	   breakpoints of function and f if these data are rational. However,
	   if oversampling is given, it must be a positive integer; then the
	   group generated by the breakpoints of function and f will be
	   refined by that factor.
	
	   If f is not provided, uses the one found by find_f.
	
	   Assume in the following that f and all breakpoints of function lie
	   in the cyclic group and that function is continuous.
	
	   Then the restriction is minimal valid if and only if function is minimal valid. 
	   The restriction is extreme if function is extreme. 
	
	   If, in addition oversampling >= 3, then the following holds: The
	   restriction is extreme if and only if function is extreme. This is
	   Theorem 1.5 in [IR2].
	
	   This oversampling factor of 3 is best possible, as demonstrated by
	   function kzh_2q_example_1 from [KZh2015a].
	   [...]

	sage: hr = restrict_to_finite_group(h)
	INFO: 2016-08-08 17:26:36,047 Rational breakpoints and f; 
	using group generated by them, (1/5)Z
	
	## This function can be set up by providing the breakpoints and the values.
	sage: discrete_function_from_points_and_values(points=[0, 1/5, 2/5, 3/5, 4/5, 1], 
	          values=[0, 1/4, 1/2, 3/4, 1, 0]) == hr
	INFO: 2016-08-8  17:26:37,190 Rational case.
	True

	## The restricted function is extreme for the finite group problem.
	sage: extremality_test(hr)
	INFO: 2016-08-8  17:26:38,121 pi(0) = 0
	INFO: 2016-08-8  17:26:38,123 pi is subadditive.
	INFO: 2016-08-8  17:26:38,123 pi is symmetric.
	INFO: 2016-08-8  17:26:38,124 Thus pi is minimal.
	INFO: 2016-08-8  17:26:38,124 Rational breakpoints and f; 
	using group generated by them, (1/5)Z
	INFO: 2016-08-8  17:26:38,158 Solution space has dimension 0
	INFO: 2016-08-8  17:26:38,158 Thus the function is extreme.
	True
	
	## For the finite group problems, automorphisms are interesting!
	sage: ha = automorphism(hr, 2)
	INFO: 2016-08-08 17:26:41,100 Rational breakpoints and f; 
	using group generated by them, (1/5)Z
	
	## We can interpolate to get a function for the infinite group problem.
	sage: hi = interpolate_to_infinite_group(ha)
	\end{verbatim}
    \\
    \bottomrule
  \end{tabular}
\end{table}
  
\section{Documentation and test suite}
\label{s:doc-test}

Following the standard conventions of Sage, the documentation strings contain
usage examples with their expected output.  If Sage is invoked as
\begin{quote}
  \texttt{sage -t $\langle\mathit{filename}\rangle$.sage},
\end{quote}
these examples are run and their results are compared to the expected results;
if the results differ, this is reported as a unit test failure.  This helps to
ensure the consistency and correctness of the algorithms and of the compendium of
extreme functions as we continue developing the software.  The command
\texttt{make check} runs the tests for all files of our software. 

A substantial part of our software is concerned with plotting certain
diagrams.  Our code also contains a testsuite of diagrams that have been
published in the survey \cite{igp_survey,igp_survey_part_2} and in several
research papers to ensure that these diagrams can still be reproduced with new
versions of our software.  The testsuite, invoked by \texttt{make
  check-graphics}, produces a multi-page PDF file which 
needs to be compared with a ``good'' copy of the PDF file by visual
inspection.
\smallbreak

Finally, \textsf{demo.sage} demonstrates further functionality and the use of
  the help system.

\section*{Acknowledgement(s)}

The second author wishes to thank his coauthors in parts I--IV of the present
series on the algorithmic theory of the Gomory--Johnson model, Amitabh Basu
and Robert Hildebrand.  Without this collaboration, developing the software
described in this paper would not have been possible.

An extended abstract of 8 pages appeared under the title \emph{Software for
  cut-generating functions in the Gomory--Johnson model and beyond} in Proc.\@
International Congress on Mathematical Software 2016.

\section*{Funding}

The first author's contribution was done during a Research Experience for
Undergraduates at the University of California, Davis. He was partially
supported by the National Science Foundation through grant DMS-0636297
(VIGRE).  All authors were partially supported by the National Science
Foundation through grant DMS-1320051 awarded to M.~K\"oppe.  The support is
gratefully acknowledged.

\appendix

\section{Enumerating the maximal additive faces of $\Delta\P$ in the continuous case}
\label{sec:algo-maximal-additive-face}
In this section, we explain the algorithm that the code \sage{generate\_maximal\_additive\_faces\_continuous($\pi$)} uses to enumerate the maximal additive faces for a continuous subadditive function $\pi$.
For functions with many breakpoints, it pays off to avoid duplicate
computations that would arise from the fact that a given lower-dimensional
face $F$ of $\Delta\P$ has many representations as $F(I, J, K)$ with $I, J,
K\in \P$.  
The algorithm considers the full-dimensional (two-dimensional) faces $F = F(I,
J, K)$ of $\Delta\P$ in the lexicographic order on $(I, J, K)$, where $I, J,
K$ are proper intervals of $\P$ with $I, J \subseteq [0,1]$ and $K\subseteq
[0,2]$, and $I \leq J$, i.e., $I$ is to the left of $J$ on the real line.
The algorithm has a fast path for discarding faces $F(I, J, K)$ that are not
full-dimensional. Lower-dimensional additive faces will be considered as subfaces of
full-dimensional faces as follows.  The algorithm distinguishes the following
four cases, depending on the number of additive vertices that each
full-dimensional $F$ has. 
\begin{enumerate}
\item If $F$ has no additive vertex, then $F$ does not contain any maximal additive face of $\Delta\P$ (nothing is green in $F$). No recording is needed for this face $F$.
\item If $F$ has only one additive vertex, denoted by $(x, y)$, then this
  vertex could either be a maximal additive face, or it could be a subface of
  a maximal additive face that is not a subface of $F$.  
  We maintain a hashed set of all vertices of maximal additive faces that have
  already been recorded during the algorithm. 
  We only process $(x,y)$ when $F$ is the last two-dimensional face
  $F(I,J,K)$ in the lexicographic order on $(I, J, K)$ that contains $(x,y)$.  
  This condition is easy to check; see \autoref{fig:vertex-halfopen-criterion}
  for an illustration of the cases.
  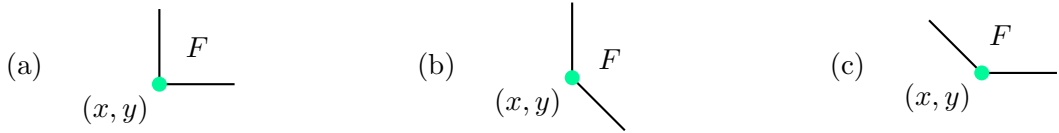
\begin{figure}[t]
    \centering
    (a)\quad\begin{minipage}{.2\textwidth}
      \begin{tikzpicture}[scale=1/2]
        \draw[](1,1) node[]{$F$}; \draw[black, thick] (0,0) node[anchor=north
        east]{$(x,y)$} -- (0,2) node[]{}; \draw[black, thick] (0,0)
        node[circle, fill=mediumspringgreen, inner sep=2pt]{} -- (2,0)
        node[]{};
      \end{tikzpicture}
    \end{minipage}
    \hfill
    (b)\quad\begin{minipage}{.2\textwidth}
      \begin{tikzpicture}[scale=1/2]
        \draw[](1,0.5) node[]{$F$}; \draw[black, thick] (0,0)
        node[anchor=north east]{$(x,y)$} -- (0,2) node[]{}; \draw[black,
        thick] (0,0) node[circle, fill=mediumspringgreen, inner sep=2pt]{} --
        (1.4,-1.4) node[]{};
      \end{tikzpicture}
    \end{minipage}
    \hfill
    (c)\quad\begin{minipage}{.2\textwidth}
      \begin{tikzpicture}[scale=1/2]
        \draw[](0.5,1) node[]{$F$}; \draw[black, thick] (0,0)
        node[anchor=north east]{$(x,y)$} -- (-1.4,1.4) node[]{}; \draw[black,
        thick] (0,0) node[circle, fill=mediumspringgreen, inner sep=2pt]{} --
        (2,0) node[]{};
      \end{tikzpicture}
    \end{minipage}
    \caption{An additive vertex $(x, y)$ is processed when it arises as a
      subface of the last two-dimensional face $F = F(I, J, K)$ in the
      lexicographic order on $(I, J, K)$ that contains $(x, y)$.
      (a) $x$ and $y$ are the left
      endpoints of $I$ and $J$; (b) $x$ and $x+y$ are the left endpoints of
      $I$ and $K$; (c) $y$ and $x+y$ are the left endpoints of $J$ and $K$.}
    \label{fig:vertex-halfopen-criterion}
  \end{figure}
In that case, we record $(x, y)$ as a maximal additive face of $\Delta\P$ when $(x,y)$ is not a vertex of a maximal additive face that has already been recorded.

\item If $F$ has exactly two additive vertices, then they form an additive
  edge $E$. It cannot lie in the interior of $F$, since otherwise
  $\Delta\pi=0$ holds over all $F$, contradicting the assumption that only two
  vertices of the two-dimensional face $F$ are additive. Therefore, this
  additive edge $E$ is a subface of $F$, and hence a one-dimensional additive
  face of $\Delta\P$.  Again we only process $E$ when $F$ is the last
  two-dimensional face $F(I,J,K)$ in the lexicographic order on $(I, J, K)$
  that contains $E$.  (There is only one other face $F'$ containing~$E$; see
  \autoref{fig:edge-halfopen-criterion}.) 
  In that case, we record $E$ if $F'$ was not recorded as an additive face.
\begin{figure}[t]
  \begin{minipage}{.3\textwidth}
    \centering
    \begin{tikzpicture}[scale=1/2]
      \draw[](1, 0) node[]{$F$}; \draw[](-1, 0) node[]{$F'$}; \draw[black,
      thick] (0,-1) node[circle, fill=mediumspringgreen, inner sep=2pt]{} --
      (0,1) node[circle, fill=green, inner sep=2pt]{};
    \end{tikzpicture}
  \end{minipage}
  \begin{minipage}{.3\textwidth}
    \centering
    \begin{tikzpicture}[scale=1/2]
      \draw[](0.5, 0.5) node[]{$F$}; \draw[](-0.5, -0.5) node[]{$F'$};
      \draw[black, thick] (-1, 1) node[circle, fill=mediumspringgreen, inner
      sep=2pt]{} -- (1,-1) node[circle, fill=green, inner sep=2pt]{};
    \end{tikzpicture}
  \end{minipage}
  \begin{minipage}{.3\textwidth}
    \centering
    \begin{tikzpicture}[scale=1/2]
      \draw[](0, 1) node[]{$F$}; \draw[](0, -1) node[]{$F'$}; \draw[black,
      thick] (-1,0) node[circle, fill=mediumspringgreen, inner sep=2pt]{} --
      (1,0) node[circle, fill=green, inner sep=2pt]{};
    \end{tikzpicture}
  \end{minipage}
  \caption{An additive edge $E = F' \cap F$ with $F' < F$ in the lexicographic
    order on $(I, J, K)$ is processed when it arises as a subface of $F$}
  \label{fig:edge-halfopen-criterion}
\end{figure}
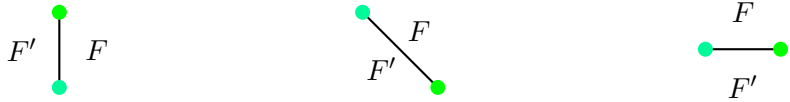
\item  If $F$ has more than $2$ additive vertices, then $\Delta\pi=0$ holds over all $F$. The face $F$ is recorded as a maximal additive face of $\Delta\P$. 
\end{enumerate}

\providecommand\ISBN{ISBN }
\bibliographystyle{gOMS-url}
\bibliography{../bib/MLFCB_bib}

\end{document}